\newtheorem{theorem}{Theorem}
\newtheorem{proposition}{Proposition}
\theoremstyle{definition}
\newtheorem{definition}{Definition} 
\theoremstyle{definition}
\newtheorem{algorithm}{Algorithm} 
\theoremstyle{definition}
\numberwithin{equation}{section}
\newcommand{\grad}{\mathrm{grad}}
\DeclareMathAlphabet{\mathcal}{OMS}{cmsy}{m}{n}
\title{Dissipative numerical schemes on Riemannian manifolds with applications to gradient flows}
\author{Elena Celledoni$^*$, Sølve Eidnes$^*$, Brynjulf Owren$^*$, Torbjørn Ringholm%
  \thanks{Department of Mathematical Sciences, Norwegian University of Science and Technology, N–7491 Trondheim\\
  This work was supported by the European Union's Horizon 2020 research and innovation programme under the Marie Skłodowska-Curie grant agreement No. 691070.}}
\begin{document}
\maketitle

\begin{abstract}
This paper concerns an extension of discrete gradient methods to finite-dimensional Riemannian manifolds termed discrete Riemannian gradients, and their application to dissipative ordinary differential equations. This includes Riemannian gradient flow systems which occur naturally in optimization problems. The Itoh--Abe discrete gradient is formulated and applied to gradient systems, yielding a derivative-free optimization algorithm. The algorithm is tested on two eigenvalue problems and two problems from manifold valued imaging: InSAR denoising and DTI denoising.

\textbf{Keywords}: Geometric integration, discrete gradients, Riemannian manifolds, numerical optimization, InSAR denoising, DTI denoising. 

\textbf{Classification}: 49M37, 53B99, 65K10, 92C55, 90C26, 90C30, 90C56
\end{abstract}

\section{Introduction}

When designing and applying numerical schemes for solving systems of ODEs and PDEs there are several important properties which serve to distinguish schemes, one of which is the preservation of geometric features of the original system. The field of geometric integration encompasses many types of numerical schemes for ODEs and PDEs specifically designed to preserve one or more such geometric features; a non-exhaustive list of features includes symmetry, symplecticity, first integrals (or energy), orthogonality, and manifold structures such as Lie group structure \cite{hairer2006geometric}.  Energy conserving methods have a successful history in the field of numerical integration of ODEs and PDEs. In a similar vein, numerical schemes with guaranteed dissipation are useful for solving dissipative equations such as gradient systems.

As seen in \cite{humphries1994runge}, any Runge--Kutta method can be dissipative when applied to gradient systems as long as step sizes are chosen small enough; less severe but still restrictive conditions for dissipation in Runge--Kutta methods are presented in \cite{hairer2013energy}. 
In \cite{gonzalez96tia}, Gonzalez introduces the notion of discrete gradient schemes with energy preserving properties, later expanded upon to include dissipative systems in \cite{mclachlan99giu}. 
These articles consider ODEs in Euclidian spaces only with the exception of \cite{hairer2013energy} where the authors also consider Runge--Kutta methods on manifolds defined by constraints.
Unlike the Runge--Kutta methods, discrete gradient methods are dissipative for all step sizes, meaning one can employ adaptive time steps while retaining convergence toward fixed points \cite{sato2015lyapunov}.
However, one may experience a practical step size restriction when applying discrete gradient methods to very stiff problems, due to the lack of $L$-stability as seen when applying the Gonzalez and mean value discrete gradients to problems with quadratic potentials \cite{hairer2013energy}\cite{hairer1996solving}.
Motivated by their work on Lie group methods, the energy conserving discrete gradient method was generalized to ODEs on manifolds, and Lie groups particularly, in \cite{Celledoni2014977} where the authors introduce the concept of discrete differentials. 
In \cite{celledoni2018energy}, this concept is specialized in the setting of Riemannian manifolds. 
To the best of our knowledge, the discrete gradient methods have not yet been formulated for dissipative ODEs on manifolds. Doing so is the central purpose of this article. 

One of the main reasons for generalizing discrete gradient methods to dissipative systems on manifolds is that gradient systems are dissipative, and gradient flows are natural tools for optimization problems which arise in e.g. manifold-valued image processing and eigenvalue problems. The goal is then to find one or more stationary points of the gradient flow of a functional $V: M \rightarrow \mathbb{R}$, which correspond to critical points of $V$. This approach is, among other optimization methods, presented in \cite{absil2009optimization}.  Since gradient systems occur naturally on Riemannian manifolds, it is natural to develop our schemes in a Riemannian manifold setting. 

A similarity between the optimization algorithms in \cite{absil2009optimization} and the manifold valued discrete gradient methods in \cite{Celledoni2014977} is their use of retraction mappings. Retraction mappings were introduced for numerical methods in \cite{shub1986some}, see also \cite{adler2002newton}; they are intended as computationally efficient alternatives to parallel transport on manifolds. Our methods will be formulated as a framework using general discrete gradients on general Riemannian manifolds with general retractions. We will consider a number of specific examples that illustrate how to apply the procedure in practical problems. 

As detailed in \cite{Grimm_2016} and \cite{ringholm2017variational}, using the Itoh--Abe discrete gradient \cite{Itoh1988}, one can obtain an optimization scheme for $n$-dimensional problems with a limited degree of implicitness. At every iteration, one needs to solve $n$ decoupled scalar nonlinear subequations, amounting to $\mathcal{O}(n)$ operations per step. In other discrete gradient schemes a system of $n$ coupled nonlinear equations must be solved per iteration, amounting to $\mathcal{O}(n^2)$ operations per step. The Itoh--Abe discrete gradient method therefore appears to be well suited to large-scale problems such as image analysis problems, and so it seems natural to apply our new methods to image analysis problems on manifolds, see \cref{subsect:mfldvalimg}. In \cite{Celledoni2014977}, the authors generalize the average vector field \cite{harten83oud} and midpoint \cite{gonzalez96tia} discrete gradients, but not the Itoh--Abe discrete gradient, to Lie groups and homogeneous manifolds. A novelty of this article is the formulation of the Itoh--Abe discrete gradient for problems on manifolds.

As examples we will consider two eigenvalue finding problems, in addition to the more involved problems of denoising InSAR and DTI images using total variation (TV) regularization \cite{weinmann2014total}. The latter two problems we consider as real applications of the algorithm. The two eigenvalue problems are included mostly for the exposition and illustration of our methods, as well as for testing convergence properties.

The paper is organized as follows: Below, we introduce notation and fix some fundamental definitions used later on. In the next section, we formulate the dissipative problems we wish to solve. In section 3, we present the discrete Riemannian gradient (DRG) methods, a convergence proof for the family of optimization methods obtained by applying DRG methods to Riemannian gradient flow problems, the Itoh--Abe discrete gradient generalized to manifolds, and the optimization algorithm obtained by applying the Itoh--Abe DRG to the gradient flow problem. In section 4, we provide numerical experiments to illustrate the use of DRGs in optimization, and in the final section we present conclusions and avenues for future work. 

\subsection*{Notation and preliminaries}
Some notation and definitions used in the following are summarized below. For a more thorough introduction to the concepts, see e.g. \cite{lang2012fundamentals} or \cite{lee2006riemannian}.

\begin{table}[ht]
{
\caption{Notational conventions}
\begin{center}
\begin{tabular}{c c}
Notation & Description  \\
\hline
$M$      & $n$-dimensional Riemannian manifold \\
$T_pM $   & tangent space at $p \in M$ with zero vector $0_p$ \\
$T^*_pM $   & cotangent space at $p \in M$ \\
$TM $   & tangent bundle of $M$ \\
$T^*M $   & cotangent bundle of $M$ \\
$\mathfrak{X}(M)$ & space of vector fields on $M$\\
$g(\cdot,\cdot)$ & Riemannian metric on $M$ \\
$\| \cdot \|_p$ & Norm induced on $T_pM$ by $g$\\
$\{E_l\}_{l = 1}^n$ & $g$-orthogonal basis of $T_pM$
\end{tabular}
\end{center}
}
\end{table}

On any differentiable manifold there is a duality pairing $\left\langle \cdot,\cdot \right\rangle: T^*M \times TM \rightarrow \mathbb{R} $ which we will denote as $\left\langle \omega,v \right\rangle = \omega(v)$. Furthermore, the Riemannian metric sets up an isomorphism between $TM$ and $T^*M$ via the linear map $v \mapsto g(v,\cdot)$. This map and its inverse, termed the musical isomorphisms, are known as the flat map $^\flat: TM \rightarrow T^*M$ and sharp map $^\sharp: T^*M \rightarrow TM$, respectively. The applications of these maps are also termed index raising and lowering when considering the tensorial representation of the Riemannian metric. Note that with the above notation we have the idiom $x^{\flat}(y) = \left\langle x^{\flat},y \right\rangle = g(x,y)$. 

On a Riemannian manifold, one can define gradients: For $V \in C^\infty(M)$, the (Riemannian) gradient with respect to $g$,  $\grad_g V \in \mathfrak{X}(M)$, is the unique vector field such that $g(\grad_g V, X) = \left\langle \mathrm{d}V,X \right\rangle$ for all $X \in \mathfrak{X}(M)$. In the language of musical isomorphisms, $\grad_g V = (\mathrm{d}V)^{\sharp}$. For the remainder of this article, we will write $\grad V$ for the gradient and assume that it is clear from the context which $g$ is to be used.
 
Furthermore, the \textit{geodesic} between $p$ and $q$ is the unique curve of minimal length between $p$ and $q$, providing a distance function $d_M: M \times M \rightarrow \mathbb{R}$. The geodesic $\gamma$ passing through $p$ with tangent $v$ is given by the Riemannian exponential at $p$, $\gamma(t) = \exp_p(tv)$. For any $p$, $\exp_p$ is a diffeomorphism on a neighbourhood $N_p$ of $0_p$, The image $\exp_p(S_p)$ of any star-shaped subset $S_p \subset N_p$ is called a normal neighbourhood of $p$, and on this, $\exp_p$ is a radial isometry, i.e. $d_M(p,\exp_p(v)) = \| v \|_p$ for all $v \in S_p$.

\section{The problem}

We will consider ordinary differential equations (ODEs) of the form
\begin{align}
\dot{u} = F(u), \quad u(0) = u^0 \in M,
\label{eq:ODE}
\end{align}
where $F \in \mathfrak{X}(M)$ has an associated energy $V: M \rightarrow \mathbb{R}$ dissipating along solutions of \eqref{eq:ODE}. That is, with $u(t)$ a solution of \eqref{eq:ODE}:
\begin{align*}
\dfrac{\mathrm{d}}{\mathrm{d}t} V(u) = \left\langle \mathrm{d}V(u),\dot{u} \right\rangle = \left\langle \mathrm{d}V(u),F(u) \right\rangle = g(\mathrm{grad}V(u),F(u)) \leq 0.
\end{align*}
An example of such an ODE is the gradient flow. Given an energy $V$, the gradient flow of $V$ with respect to a Riemannian metric $g$ is 
\begin{align}
\dot{u} = -\grad V(u), \label{eq:gradflow}
\end{align}
which is dissipative since if $u(t)$ solves \eqref{eq:gradflow}, we have
\begin{align*}
\dfrac{\mathrm{d}}{\mathrm{d}t} V(u) = -g\left(\grad V(u),\grad V(u)\right) \leq 0 .
\end{align*}
\textbf{Remark:} This setting can be generalized by an approach similar to\cite{mclachlan99giu}. Suppose there exists a (0,2) tensor field $h$ on $M$ such that $h(x,x) \leq 0$. We can associate to $h$ the (1,1) tensor field $H: TM \rightarrow TM$ given by $Hx = h(x,\cdot)^{\sharp}$. Consider the system
\begin{align}
\dot{u} = H\mathrm{grad}V(u). \label{eq:gradform}
\end{align}
This system dissipates $V$, since
\begin{align*}
\dfrac{\mathrm{d}}{\mathrm{d}t}V(u) &= \left\langle \mathrm{d}V(u),  H\mathrm{grad}V(u)\right\rangle\\
&= g\left(\mathrm{grad}V(u),  H\mathrm{grad}V(u)\right)\\
&= h \left( \mathrm{grad}V(u), \mathrm{grad}V(u)\right) \leq 0.
\end{align*}
Any dissipative system of the form \eqref{eq:ODE} can be written in this form on the set $M\backslash \{p \in M: g(F(p),\grad V(p)) = 0 \}$ since, given $F$ and $V$, we can construct $h$ as follows: 
\begin{align*}
h= \dfrac{1}{g(F,\grad V)}F^{\flat} \otimes F^{\flat}. 
\end{align*}
If $F = -\grad V$, we take $h = -g$ such that $H$ becomes $-\mathrm{Id}$, and recover \eqref{eq:gradflow}. In the following, we mainly discuss the case $F = -\grad V$ for the sake of notational clarity.

\section{Numerical scheme}
The discrete differentials in \cite{Celledoni2014977} are formulated such that they may be used on non-Riemannian manifolds. Since we restrict ourselves to Riemannian manifolds, we define their analogues: discrete Riemannian gradients. As with the discrete differentials, we shall make use of retractions as defined in \cite{shub1986some}.
\begin{definition} Let $\phi: TM \rightarrow M$ and denote by $\phi_p$ the restriction of $\phi$ to $T_pM$. Then, $\phi$ is a \textit{retraction} if the following conditions are satisfied:
\begin{itemize}
\item $\phi_p$ is smooth and defined in an open ball $B_{r_p(0_p)}$ of radius $r_p$ around $0_p$, the zero vector in $T_pM$.
\item $\phi_p(v) = p$ if and only if $v = 0_p$.
\item Identifying $T_{0_p} T_p M \simeq T_pM$, $\phi_p$ satisfies 
\begin{align*}
d\phi_p\big|_{0_p} = \mathrm{id}_{T_p M},
\end{align*} 
where $\mathrm{id}_{T_p M}$ denotes the identity mapping on $T_p M$.
\end{itemize}
\end{definition}
From the inverse function theorem it follows that for any $p$, there exists a neighbourhood $U_{p,\phi} \in T_p M$ of $0_p$, such that $\phi_p : U_{p,\phi} \rightarrow \phi_p(U_{p,\phi}) $ is a diffeomorphism. In general, $\phi_p$ is not a diffeomorphism on the entirety of $T_pM$ and so all the following schemes must be considered local in nature. The canonical retraction on a Riemannian manifold is the Riemannian exponential. It may be computationally expensive to evaluate even if closed expressions for geodesics are known, and so one often wishes to come up with less costly retractions if possible. We are now ready to introduce the notion of discrete Riemannian gradients.

\begin{definition}
Given a retraction $\phi$, a function  $c:M \times M \rightarrow M$ where $c(p,p) = p $ for all $ p \in M$ and a continuous $V:M \rightarrow \mathbb{R}$, then $\overline{\mathrm{grad}}V:M\times M \rightarrow TM$ is a discrete Riemannian gradient of $V$ if it is continuous and, for all $p,q \in U_{c(p,q),\phi}$,
\begin{align}
V(q) - V(p) &= g \left( \overline{\mathrm{grad}}V(p,q),\phi_{c(p,q)}^{-1}(q) - \phi_{c(p,q)}^{-1}(p)\right) \label{eq:DGprop1}\\
\overline{\mathrm{grad}}V(p,p) &=  \mathrm{grad}V|_{p}. \label{eq:DGprop2}
\end{align} 
\end{definition}
We formulate a numerical scheme for equation \eqref{eq:gradflow} based on this definition. Given times $0 = t_0 < t_1 < ... $, let $u^k$ denote the approximation to $u(t_k)$ and let $\tau_k = t_{k+1} - t_{k}$. Then, we take
\begin{align}
u^{k+1} &= \phi_{c^k}\left(W(u^k,u^{k+1})\right) \label{eq:scheme1}\\
  W(u^k,u^{k+1}) &= \phi_{c^k}^{-1}(u^k) - \tau_k  \, \overline{\mathrm{grad}}V(u^k,u^{k+1})  \label{eq:scheme2}
\end{align}
where  $c^k = c(u^k, u^{k+1})$ and 
In the above and all of the following, we assume that $u^k$ and $u^{k+1}$ lie in $U_{c^k,\phi} \cap S_{c^k}$. The following proposition verifies that the scheme is dissipative.
\begin{proposition} The sequence $\{ u^k \}_{k \in \mathbb{N}}$ generated by the DRG scheme \eqref{eq:scheme1}-\eqref{eq:scheme2} satisfies $V(u^{k+1}) - V(u^k) \leq 0$ for all $k \in \mathbb{N}$. \label{prop:dissip}
\end{proposition}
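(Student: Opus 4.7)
The plan is to plug the scheme directly into the defining property \eqref{eq:DGprop1} of the discrete Riemannian gradient and read off the sign of $V(u^{k+1}) - V(u^k)$ from the resulting expression.

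First I would set $p = u^k$ and $q = u^{k+1}$ in \eqref{eq:DGprop1}, which is legitimate since by assumption $u^k, u^{k+1} \in U_{c^k,\phi}$ with $c^k = c(u^k, u^{k+1})$. This yields
\begin{equation*}
V(u^{k+1}) - V(u^k) = g\bigl(\overline{\mathrm{grad}}V(u^k,u^{k+1}),\, \phi_{c^k}^{-1}(u^{k+1}) - \phi_{c^k}^{-1}(u^k)\bigr),
\end{equation*}
where the metric $g$ is evaluated at $c^k$.

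Next I would use the update rule. Since $u^{k+1} = \phi_{c^k}(W(u^k,u^{k+1}))$ and $\phi_{c^k}$ is a diffeomorphism on the relevant neighbourhood, applying $\phi_{c^k}^{-1}$ gives $\phi_{c^k}^{-1}(u^{k+1}) = W(u^k,u^{k+1}) = \phi_{c^k}^{-1}(u^k) - \tau_k\, \overline{\mathrm{grad}}V(u^k,u^{k+1})$, so the tangent-vector difference in the display above simplifies to $-\tau_k\, \overline{\mathrm{grad}}V(u^k,u^{k+1})$. Substituting back yields
\begin{equation*}
V(u^{k+1}) - V(u^k) = -\tau_k\, \bigl\|\overline{\mathrm{grad}}V(u^k,u^{k+1})\bigr\|_{c^k}^{2} \leq 0,
\end{equation*}
which is the claim.

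There is essentially no obstacle: the only thing to be careful about is that the invertibility of $\phi_{c^k}$ on the relevant tangent-space neighbourhood and the membership of $u^k, u^{k+1}$ in $U_{c^k,\phi}$ are exactly the standing hypotheses stated just before the proposition, so both the application of $\phi_{c^k}^{-1}$ and the use of \eqref{eq:DGprop1} are justified. Note also that the argument gives a slightly stronger statement, namely an explicit bound on the dissipation in terms of $\tau_k$ and the discrete gradient, which will presumably be useful in the convergence analysis alluded to in the introduction.
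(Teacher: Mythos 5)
Your proof is correct and follows essentially the same route as the paper's: substitute the scheme \eqref{eq:scheme1}--\eqref{eq:scheme2} into the defining property \eqref{eq:DGprop1}, identify $\phi_{c^k}^{-1}(u^{k+1}) - \phi_{c^k}^{-1}(u^k) = -\tau_k\,\overline{\mathrm{grad}}V(u^k,u^{k+1})$, and conclude from positivity of the metric. The explicit dissipation bound $-\tau_k \|\overline{\mathrm{grad}}V(u^k,u^{k+1})\|_{c^k}^2$ you note is indeed exactly what the paper later exploits in the proof of \cref{theo:convergence}.
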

\begin{proof}
Using property \eqref{eq:DGprop1} and equations \eqref{eq:scheme1} and \eqref{eq:scheme2}, we get 
\begin{align*}
V(u^{k+1}) - V(u^k) &= g\left(\overline{\mathrm{grad}}V(u^k,u^{k+1}),\phi_{c^k}^{-1}(u^{k+1}) - \phi_{c^k}^{-1}(u^k)\right)\\
&= g \left(\overline{\mathrm{grad}}V(u^k,u^{k+1}),W(u^k,u^{k+1}) - \phi_{c^k}^{-1}(u^k)\right)\\
&= -\tau_k g\left(\overline{\mathrm{grad}}V(u^k,u^{k+1}), \overline{\mathrm{grad}}V(u^k,u^{k+1}) \right) \leq 0
\end{align*}
\end{proof}
\textbf{Remark:} This extends naturally to schemes for \eqref{eq:gradform} by exchanging \eqref{eq:scheme2} for 
\begin{align*}
  W(u^k,u^{k+1}) &= \phi_{c^k}^{-1}(u^k) + \tau_k  \, \overline{H}_{(u^k, u^{k+1})} \, \overline{\mathrm{grad}}V(u^k,u^{k+1}), 
\end{align*}
where $\overline{H}_{(p,q)}$ is the (1,1) tensor associated with a negative semi-definite (0,2) tensor field $\overline{h}_{(p,q)} : T_{c(p,q)}M \times T_{c(p,q)}M \rightarrow \mathbb{R}$ approximating $h|_p$ consistently.

Two DRGs, the AVF DRG and the Gonzalez DRG, can be easily found by index raising the discrete differentials defined in \cite{Celledoni2014977}. We will later generalize the Itoh--Abe discrete gradient, but first we present a proof that the DRG scheme converges to a stationary point when used as an optimization algorithm. We will need the following definition of coercivity:
\begin{definition}
A function $V: M\rightarrow \mathbb{R}$ is \textit{coercive} if, for all $v \in M$, every sequence $\{u^k \}_{k \in \mathbb{N}} \subset M $ such that $\lim \limits_{k \rightarrow \infty} d_M(u^k,v) = \infty$ also satisfies $\lim \limits_{k \rightarrow \infty} V(u^k) = \infty$.
\end{definition}
We will also need the following theorem from \cite{udriste1994convex}, concerning the boundedness of the sublevel sets $M_\mu = \{u \in M : V(u) \leq \mu \}$ of $V$:
\begin{theorem}
Assume $M$ is unbounded. Then the sublevel sets of $V:M\rightarrow \mathbb{R}$ are bounded if and only if $V$ is coercive. \label{theo:coerciveBdd}
\end{theorem}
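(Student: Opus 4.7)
The plan is to prove both implications by contradiction, working directly from the definitions of coercivity and of a bounded subset of $M$ (i.e.\ a set contained in some metric ball with respect to $d_M$). The hypothesis that $M$ is unbounded plays only a minor role: it guarantees that for any basepoint $v \in M$ there exist sequences $\{u^k\} \subset M$ with $d_M(u^k, v) \to \infty$, which makes the coercivity condition substantive rather than vacuous.

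For the direction (bounded sublevel sets $\Rightarrow$ coercive), I would fix an arbitrary $v \in M$ and an arbitrary sequence $\{u^k\}$ with $d_M(u^k, v) \to \infty$, and suppose for contradiction that $V(u^k)$ does not diverge to $+\infty$. Then some subsequence $\{u^{k_j}\}$ satisfies $V(u^{k_j}) \leq C$ for a fixed constant $C$, so the subsequence lies inside the sublevel set $M_C$. By assumption $M_C$ is bounded, hence contained in some ball $B_R(v)$, contradicting $d_M(u^{k_j}, v) \to \infty$.

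For the reverse direction (coercive $\Rightarrow$ bounded sublevel sets), I would argue contrapositively: suppose some $M_\mu$ is unbounded. Then, fixing any basepoint $v$, the unboundedness of $M_\mu$ allows me to extract points $u^k \in M_\mu$ with $d_M(u^k, v) \geq k$ for every $k \in \mathbb{N}$, so that $d_M(u^k, v) \to \infty$. Coercivity applied to this sequence forces $V(u^k) \to \infty$, which is incompatible with the bound $V(u^k) \leq \mu$ inherited from $u^k \in M_\mu$.

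I do not anticipate any serious technical obstacle. The only point that requires a small amount of care is the definition of "unbounded subset" on a Riemannian manifold and the verification that unboundedness with respect to one basepoint is equivalent to unboundedness with respect to every other basepoint, which follows immediately from the triangle inequality for $d_M$. Once that is settled, both directions are short applications of the definitions.
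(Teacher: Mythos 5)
Your argument is correct: both implications go through exactly as you describe, and the one point requiring care---that (un)boundedness of a subset of $M$ does not depend on the choice of basepoint---is indeed settled by the triangle inequality for $d_M$. The comparison with the paper is unusual, however, because the paper does not prove this theorem at all: its ``proof'' consists of the citation ``see \cite{udriste1994convex}, Theorem 8.6, Chapter 1 and the remarks below it.'' Your proof is therefore a self-contained replacement for an appeal to an external reference. What the citation buys the authors is brevity; what your elementary argument buys is a verification that the equivalence really does hold for the \emph{specific} definition of coercivity adopted in this paper (a sequential condition, quantified over all basepoints $v$), which is not a purely formal point since definitions of coercivity vary across the literature and need not coincide with the one used in the cited monograph. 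Your observation about the hypothesis that $M$ be unbounded is also accurate and worth keeping: under the paper's definitions, if $M$ were bounded then every function would be vacuously coercive and every sublevel set automatically bounded, so the equivalence would hold trivially; the hypothesis serves only to make the statement substantive, and your proof makes that transparent in a way the citation does not.
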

\begin{proof}
 See \cite{udriste1994convex}, Theorem 8.6, Chapter 1 and the remarks below it.  
\end{proof}
Equipped with this, we present the following theorem, the proof of which is inspired by that of the convergence theorem in \cite{Grimm_2016}.

\begin{theorem}\label{theo:convergence}
Assume that $M$ is geodesically complete, that $V: M \rightarrow R$ is coercive, bounded from below and continuously differentiable, and that $\overline{\mathrm{grad}}V$ is continuous. Then, the iterates $\{u^k\}_{k\in \mathbb{N}}$ produced by applying the discrete Riemannian gradient scheme \eqref{eq:scheme1}-\eqref{eq:scheme2} with time steps $0 < \tau_{min} \leq \tau_k \leq \tau_{max}$ and $c^k = u^k$ or $c^k = u^{k+1}$, to the gradient flow of $V$ satisfy
\begin{align*}
\lim \limits_{k \rightarrow \infty} \overline{\mathrm{grad}}V(u^k,u^{k+1}) =  \lim \limits_{k \rightarrow \infty} \mathrm{grad} V(u^k) = 0.
\end{align*}
Additionally, there exists at least one accumulation point $u^*$ of $\{u^k\}_{k\in \mathbb{N}}$, and any such accumulation point satisfies $\mathrm{grad}V(u^*) = 0$. 
\end{theorem}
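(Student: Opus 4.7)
The plan is to first establish a compact region containing all iterates, then use the dissipation identity from Proposition~\ref{prop:dissip} to force $\|\overline{\mathrm{grad}}V(u^k,u^{k+1})\|_{c^k}\to 0$, and finally push this information through the retraction to identify accumulation points as stationary points.

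First I would use Proposition~\ref{prop:dissip} to deduce that $\{V(u^k)\}$ is nonincreasing; since $V$ is bounded below, it converges, so $V(u^k)-V(u^{k+1})\to 0$. All iterates lie in the sublevel set $M_{V(u^0)}$, which is bounded by coercivity and \cref{theo:coerciveBdd}; by geodesic completeness and Hopf--Rinow, its closure $K$ is compact. Rereading the computation in the proof of Proposition~\ref{prop:dissip} yields the exact dissipation identity
\begin{equation*}
V(u^k) - V(u^{k+1}) = \tau_k \, \|\overline{\mathrm{grad}}V(u^k,u^{k+1})\|_{c^k}^2.
\end{equation*}
The bound $\tau_k\geq \tau_{\min}>0$ then gives $\|\overline{\mathrm{grad}}V(u^k,u^{k+1})\|_{c^k}\to 0$, which is already the first of the two claimed limits modulo an identification with the zero vector.

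Next I would show that $d_M(u^k,u^{k+1})\to 0$, because this is what unlocks \eqref{eq:DGprop2}. From \eqref{eq:scheme1}--\eqref{eq:scheme2},
\begin{equation*}
\phi_{c^k}^{-1}(u^{k+1}) - \phi_{c^k}^{-1}(u^k) = -\tau_k \,\overline{\mathrm{grad}}V(u^k,u^{k+1}),
\end{equation*}
and for either choice $c^k=u^k$ or $c^k=u^{k+1}$ one of the two endpoints is the zero vector in $T_{c^k}M$, so the other has norm $\tau_k\|\overline{\mathrm{grad}}V(u^k,u^{k+1})\|_{c^k}\le \tau_{\max}\|\overline{\mathrm{grad}}V(u^k,u^{k+1})\|_{c^k}\to 0$. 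Since $d\phi_p\big|_{0_p}=\mathrm{id}_{T_pM}$ and $K$ is compact, a standard uniformity argument over the zero section produces $\delta,C>0$ such that $\phi_p$ is a diffeomorphism on $B_\delta(0_p)$ for every $p\in K$ with $d_M(p,\phi_p(v))\le C\|v\|_p$ for $\|v\|_p\le\delta$; applied to the displacement above, this delivers $d_M(u^k,u^{k+1})\to 0$.

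Finally, compactness of $K$ guarantees at least one accumulation point $u^*$. Given a subsequence $u^{k_j}\to u^*$, the preceding paragraph forces $u^{k_j+1}\to u^*$ and $c^{k_j}\to u^*$, so continuity of $\overline{\mathrm{grad}}V$ and of the norm on $TM$, together with \eqref{eq:DGprop2}, imply
\begin{equation*}
\|\mathrm{grad}V(u^*)\|_{u^*} = \lim_{j\to\infty}\|\overline{\mathrm{grad}}V(u^{k_j},u^{k_j+1})\|_{c^{k_j}} = 0.
\end{equation*}
To upgrade this to the full limit $\mathrm{grad}V(u^k)\to 0$, I would argue by contradiction: if $\|\mathrm{grad}V(u^{k_j})\|_{u^{k_j}}\ge \varepsilon$ along some subsequence, compactness of $K$ extracts a further subsequence converging to some $\tilde u\in K$ with $\mathrm{grad}V(\tilde u)=0$ by the accumulation-point statement just proved, contradicting the lower bound by continuity. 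The same reasoning applied to $\overline{\mathrm{grad}}V(u^k,u^{k+1})$ (together with the already-established bound on its norm) gives the stated joint limit. I expect the main obstacle to be the transfer from $\|\overline{\mathrm{grad}}V(u^k,u^{k+1})\|_{c^k}\to 0$ to $d_M(u^k,u^{k+1})\to 0$: it is not merely a pointwise property of each $\phi_p$ but requires a uniform Lipschitz control of the retraction over the compact set $K$, which must be carefully extracted from the smoothness assumptions in the definition of $\phi$.
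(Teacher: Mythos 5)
Your proof is correct, and while it shares the paper's overall skeleton (dissipativity plus boundedness below give convergence of $V(u^k)$; coercivity, \cref{theo:coerciveBdd} and Hopf--Rinow give a compact set containing all iterates; vanishing discrete gradients force consecutive iterates together; accumulation points are then stationary), the two delicate steps are handled by genuinely different arguments. For the passage from $\|\phi_{c^k}^{-1}(u^{k+1})\|_{c^k}\to 0$ to $d_M(u^k,u^{k+1})\to 0$, the paper compares the retraction with the exponential map: it sets $\psi_{c^k}=\exp_{c^k}^{-1}\circ\,\phi_{c^k}$ and combines the Fr\'echet estimate $\psi_{c^k}(x)-x=o(x)$ with the radial isometry $d_M(c^k,u^{k+1})=\|\exp_{c^k}^{-1}(u^{k+1})\|_{c^k}$, whereas you posit a uniform bound $d_M(p,\phi_p(v))\le C\|v\|_p$ for all $p$ in the compact set $K$ and $\|v\|_p\le\delta$. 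Your closing instinct that uniformity over $K$ is the crux is exactly right: the paper's little-$o$ estimate is pointwise in the base point $c^k$, which changes with $k$, so strictly speaking the paper's own proof also needs (but never states) such uniform control; your version makes this explicit, though the ``standard uniformity argument'' deserves substantiation --- it requires $(p,v)\mapsto\phi_p(v)$ to be smooth jointly in $(p,v)$ near the zero section (true for retractions as usually defined, though the paper's definition literally only asks each $\phi_p$ to be smooth). For the limit $\mathrm{grad}\,V(u^k)\to 0$, the paper proves it before discussing accumulation points, via Heine--Cantor: $\overline{\mathrm{grad}}V$ is uniformly continuous on the compact product $M_{V(u^0)}\times M_{V(u^0)}$, so $d_M(u^k,u^{k+1})\to 0$ yields $\|\overline{\mathrm{grad}}V(u^k,u^{k+1})-\mathrm{grad}V(u^k)\|_{c^k}\to 0$ directly; you instead first characterize accumulation points and then upgrade to the full limit by a subsequence/contradiction argument, which trades uniform continuity for an indirect proof --- both are valid. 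Two smaller deviations: you obtain $\|\overline{\mathrm{grad}}V(u^k,u^{k+1})\|_{c^k}\to 0$ simply because consecutive differences of the convergent monotone sequence $V(u^k)$ vanish, while the paper telescopes the dissipation identity and gets the stronger summability $\sum_k\|\overline{\mathrm{grad}}V(u^k,u^{k+1})\|_{c^k}^2<\infty$; and you pass to the closure of the sublevel set, whereas the paper notes $M_{V(u^0)}$ is already closed as the preimage of a closed interval under the continuous $V$.
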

\begin{proof}
Since $V$ is bounded from below and by \cref{prop:dissip}, we have
\begin{align*}
C \leq V(u^{k+1}) \leq V(u^k) \leq ... \leq V(u^0)
\end{align*}
such that, by the monotone convergence theorem, $V^* := \lim_{k \rightarrow \infty} V(u^k)$ exists. Furthermore, by property \eqref{eq:DGprop1} and using the scheme \eqref{eq:scheme1}-\eqref{eq:scheme2}:
\begin{align*}
\dfrac{1}{\tau_k} \left\|\phi_{c^k}^{-1}(u^k) - \phi_{c^k}^{-1}(u^{k+1}) \right\|_{c^{k}}^2 &= \tau_k \left\|\overline{\mathrm{grad}}V(u^k,u^{k+1}) \right\|_{c^k}^2\\
&= g\left(\overline{\mathrm{grad}} V (u^k,u^{k+1}), \phi_{c^k}^{-1}(u^k) - \phi_{c^k}^{-1}(u^{k+1}) \right)\\
&= V(u^k) - V(u^{k+1}).
\end{align*}
From this, it is clear that for any $i,j \in \mathbb{N}$,
\begin{align*}
\sum \limits_{k = i}^{j-1} \tau_k \left\|\overline{\mathrm{grad}}V(u^k,u^{k+1}) \right\|_{c^k}^2 =  V(u^i) - V(u^{j}) \leq V(u^0) - V^*
\end{align*}
and 
\begin{align*}
\sum \limits_{k = i}^{j-1} \dfrac{1}{\tau_k} \left \|\phi_{c^k}^{-1}(u^k) - \phi_{c^k}^{-1}(u^{k+1}) \right\|_{c^k}^2 =  V(u^i) - V(u^j) \leq V(u^0) - V^*.
\end{align*}
In particular,
\begin{align*}
\sum \limits_{k = 0}^{\infty} \left\|\overline{\mathrm{grad}}V(u^k,u^{k+1}) \right\|_{c^k}^2 \leq \dfrac{V(u^0) - V^*}{\tau_{min}},
\end{align*}
and 
\begin{align*}
\sum \limits_{k = 0}^{\infty} \left \|\phi_{c^k}^{-1}(u^k) - \phi_{c^k}^{-1}(u^{k+1}) \right \|_{c^k}^2 \leq \tau_{max} \left(V(u^0) - V^*\right),
\end{align*}
meaning
\begin{align*}
\lim_{k \rightarrow \infty}\left \|\overline{\mathrm{grad}}V(u^k,u^{k+1}) \right\|_{c^k} &= 0,\\
\lim_{k \rightarrow \infty}\left \|\phi_{c^k}^{-1}(u^k) - \phi_{c^k}^{-1}(u^{k+1}) \right\|_{c^k} &= 0.
\end{align*}
Since $u^{k+1}$ is in a normal neighbourhood of $c^k$,
\begin{align}
d_M(c^k,u^{k+1}) &= d_M(c^k,\exp_{c^k}(\exp_{c^k}^{-1}(u^{k+1}))) = \|\exp^{-1}_{c^k}(u^{k+1}) \|_{c^k}. \label{eq:firstineq}
\end{align}
Introduce $\psi_{c^k} : T_{c^k}M \rightarrow T_{c^k}M $ by $\psi_{c^k} = \exp^{-1}_{c^k} \circ \, \phi_{c^k} $. Since both $\exp$ and $\phi$ are retractions,
\begin{align*}
\psi_{c^k}(0_{c^k}) &= 0_{c^k},\\
D\psi_{c^k}|_{0_{c^k}} &= \mathrm{id}_{T_{c^k} M}.
\end{align*}
Thus, per definition of Fr\'{e}chet derivatives,
\begin{align*}
\psi_{c^k}(x) - \psi_{c^k}(0_{c^k}) - D\psi_{c^k}|_{0_{c^k}} x = \psi_{c^k}(x) - x = o(x),
\end{align*}
in particular: choosing $x = \phi^{-1}_{c^k}(u^{k+1})$ we get
\begin{align*}
\exp^{-1}_{c^k}(u^{k+1}) - \phi^{-1}_{c^k}(u^{k+1})= o(\|\phi^{-1}_{c^k}(u^{k+1})\|_{c^k}),
\end{align*}
meaning 
\begin{align}
\|\exp^{-1}_{c^k}(u^{k+1})\|_{c^k} \leq \| \phi^{-1}_{c^k}(u^{k+1}) \|_{c^k} +  o(\|\phi^{-1}_{c^k}(u^{k+1})\|_{c^k}) . \label{eq:secondineq}
\end{align}
Taking $c^k = u^k$ and combining \eqref{eq:firstineq} and \eqref{eq:secondineq} we find
\begin{align*}
d(u^k,u^{k+1}) = \|\exp^{-1}_{c^k}(u^{k+1}) \|_{c^k} \leq \| \phi^{-1}_{c^k}(u^{k+1}) \|_{c^k} + o(\|\phi^{-1}_{c^k}(u^{k+1})\|_{c^k}).
\end{align*}
Hence, since $\left \|\phi_{c^k}^{-1}(u^k) - \phi_{c^k}^{-1}(u^{k+1}) \right\|_{c^k} = \left \| \phi_{c^k}^{-1}(u^{k+1}) \right\|_{c^k}$ when $c^k = u^k$,
\begin{align}
\lim_{k \rightarrow \infty} d(u^k,u^{k+1}) \leq \lim_{k \rightarrow \infty} \left \|\phi_{c^k}^{-1}(u^k) - \phi_{c^k}^{-1}(u^{k+1}) \right\|_{c^k} = 0. \label{eq:dconv}
\end{align}
Note that we can exchange the roles of $u^k$ and $u^{k+1}$ and obtain the same result.

Since $V$ is bounded from below, the sublevel sets $M_\mu$ of $V$ are the preimages of the closed subsets $[C,\mu]$ and are hence closed as well. Since $V$ is assumed to be coercive, by \cref{theo:coerciveBdd} the $M_\mu$ are bounded, and so since $M$ is geodesically complete, by the Hopf-Rinow theorem the $M_\mu$ are compact \cite{udriste1994convex}.  In particular, $M_{V(u^0)}$ is compact such that $\overline{\mathrm{grad} }V$ is uniformly continuous on $M_{V(u^0)} \times M_{V(u^0)}$ by the Heine-Cantor theorem. This means that for any $\epsilon > 0$ there exists $\delta > 0$ such that if $d_{M\times M}((u^k,u^{k+1}),(u^{k},u^k)) = d_{M}(u^k,u^{k+1}) < \delta$, then 
\begin{align*}
\left  \|\overline{\mathrm{grad}} V (u^k,u^{k+1}) - \mathrm{grad} V(u^k) \right \|_{c^k}  = \left \|\overline{\mathrm{grad}} V (u^k,u^{k+1}) - \overline{\mathrm{grad}} V(u^k,u^k) \right \|_{c^k}  < \epsilon.
\end{align*}
Since $d_{M}(u^k,u^{k+1}) \rightarrow 0$, given $\epsilon > 0$ there exists $K$ such that for all $k > K$, 
\begin{align*}
\left \|  \mathrm{grad} V(u^k)\right \|_{c^k}  \leq \left \|\overline{\mathrm{grad}} V (u^k,u^{k+1}) - \mathrm{grad} V(u^k) \right \|_{c^k}  +\left \|\overline{\mathrm{grad}} V (u^k,u^{k+1}) \right\|_{c^k}  \leq  2\epsilon.
\end{align*}
This means 
\begin{align*}
\lim_{k \rightarrow \infty}  \mathrm{grad} V(u^k)  = 0.
\end{align*}
Since $M_{V(u^0)}$ is compact, there exists a convergent subsequence $\{u^{k_l} \} $ with limit $u^*$. Since $V$ is continuously differentiable,
\begin{align*}
\textrm{grad}V(u^*) = \lim_{l \rightarrow \infty} \textrm{grad}V(u^{k_l}) = 0.
\end{align*} 
\end{proof}
\textbf{Remark:} In the above proof, we assumed $c^k = u^k$ or $c^k = u^{k+1}$. Although these choices may be desirable for practical purposes, as discussed in the next subsection, one can also make a more general choice. Specifically, if $\phi = \exp$ and $c^k$, let $\gamma^k(t)$ be the geodesic between $u^k$ and $u^{k+1}$ such that
\begin{align*}
\gamma^k(t) = \exp_{u^k}(tv^k) 
\end{align*}
where $v^k = \exp^{-1}_{u^k}(u^{k+1})$. Then, taking $c^k = \gamma^k(s)$ for some $s \in [0,1]$, uniqueness of geodesics implies that
\begin{align*}
\exp_{c^k}(t\dot{\gamma}^k(s)) = \exp_{u^k}((t+s)v^k).
\end{align*}
Hence,
\begin{align*}
\exp^{-1}_{c^k}(u^k) = -s\dot{\gamma}^k(s), \qquad \exp^{-1}_{c^k}(u^{k+1}) = (1-s)\dot{\gamma}^k(s), 
\end{align*}
and so, since geodesics are constant speed curves: 
\begin{align*}
d(u^k,u^{k+1}) = \|v\|_{u^k} = \| \dot{\gamma}^k(s) \|_{c^k} =  \|   \exp^{-1}_{c^k}(u^k) - \exp^{-1}_{c^k}(u^{k+1}) \|_{c^k}.
\end{align*}
This means that \eqref{eq:dconv} holds in this case. No other arguments in \cref{theo:convergence} are affected.
\subsection{Itoh--Abe discrete Riemannian gradient}
The Itoh--Abe discrete gradient \cite{Itoh1988} can be generalized to Riemannian manifolds.
\begin{proposition}
Given a continuously differentiable energy $V:M \rightarrow \mathbb{R}$ and an orthogonal basis $\{E_j\}_{j = 1}^n$ for $T_{c(u,v)}M$ such that
\begin{align*}
\phi_c^{-1}(v) - \phi_c^{-1}(u) = \sum \limits_{i=1}^n \alpha_i E_i,
\end{align*}
define $\overline{\mathrm{grad}}_{\mathrm{IA}}V : M \times M \rightarrow T_{c(u,v)}M $ by
\begin{align*}
\overline{\mathrm{grad}}_{\mathrm{IA}} V(u,v) = \sum \limits_{j=1}^{n} a_j E_j,
\end{align*}
where
\begin{align*}
a_j &= 
\begin{cases}
\dfrac{V(w_j) - V(w_{j-1})}{\alpha_j}, \quad &\alpha_j \neq 0\\
g(\mathrm{grad}V(w_{j-1}),d \phi_c \big|_{\eta_{j-1}} E_j), \quad &\alpha_j = 0.
\end{cases}\\
w_j &= \phi_c(\eta_j), \quad \eta_j = \phi_c^{-1}(u) + \sum \limits_{i=1}^j \alpha_i E_i.
\end{align*}
Then, $\overline{\mathrm{grad}}_{\mathrm{IA}} V$ is a discrete Riemannian gradient.
\end{proposition}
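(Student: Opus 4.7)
The plan is to verify the three things required by the definition of a discrete Riemannian gradient: the secant-like identity \eqref{eq:DGprop1}, the consistency condition \eqref{eq:DGprop2}, and continuity of the map $(u,v)\mapsto \overline{\mathrm{grad}}_{\mathrm{IA}}V(u,v)$. Throughout, write $c=c(u,v)$ and read the table entry ``$g$-orthogonal basis'' as orthonormal; if it is only orthogonal, exactly the same argument goes through after rescaling the coefficients $a_j$ by $\|E_j\|^{-2}$.

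\textbf{Secant identity.} The first step is to telescope along the chain $w_0,\dots,w_n$. Since $\eta_0=\phi_c^{-1}(u)$ and $\eta_n=\phi_c^{-1}(u)+\sum_i\alpha_i E_i=\phi_c^{-1}(v)$, one gets $w_0=u$ and $w_n=v$, so $V(v)-V(u)=\sum_{j=1}^n\bigl(V(w_j)-V(w_{j-1})\bigr)$. For each $j$ we have $V(w_j)-V(w_{j-1})=\alpha_j a_j$: this holds by definition of $a_j$ when $\alpha_j\neq 0$, and when $\alpha_j=0$ the vectors $\eta_j=\eta_{j-1}$ coincide, hence $w_j=w_{j-1}$ and both sides vanish. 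Then, using orthonormality of $\{E_j\}$,
\begin{align*}
g\!\left(\overline{\mathrm{grad}}_{\mathrm{IA}}V(u,v),\ \phi_c^{-1}(v)-\phi_c^{-1}(u)\right)
=\sum_{i,j}a_j\alpha_i\,g(E_j,E_i)=\sum_{j=1}^n a_j\alpha_j = V(v)-V(u),
\end{align*}
which is \eqref{eq:DGprop1}.

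\textbf{Consistency.} When $v=u$, the axiom $c(u,u)=u$ yields $c=u$, so $\phi_u^{-1}(u)=0_u$ and all $\alpha_j=0$. Consequently $\eta_j=0_u$ and $w_j=\phi_u(0_u)=u$ for every $j$, so we are in the second branch of the definition. The retraction property $d\phi_u\big|_{0_u}=\mathrm{id}_{T_uM}$ gives $a_j=g(\mathrm{grad}V(u),E_j)$, and expanding in the orthonormal basis yields $\overline{\mathrm{grad}}_{\mathrm{IA}}V(u,u)=\sum_j g(\mathrm{grad}V(u),E_j)E_j=\mathrm{grad}V|_u$, verifying \eqref{eq:DGprop2}.

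\textbf{Continuity.} This is the step that requires genuine work, since the formula for $a_j$ is defined piecewise at $\alpha_j=0$. The key is to show that the two branches agree continuously in the limit. Fix the auxiliary data $w_{j-1}$, $\eta_{j-1}$, $E_j$ and consider the smooth function $f(s):=V\bigl(\phi_c(\eta_{j-1}+sE_j)\bigr)$. By the chain rule,
\begin{align*}
f'(0)=dV\big|_{w_{j-1}}\!\bigl(d\phi_c\big|_{\eta_{j-1}}E_j\bigr)=g\!\left(\mathrm{grad}V(w_{j-1}),\ d\phi_c\big|_{\eta_{j-1}}E_j\right).
\end{align*}
Since $V(w_j)-V(w_{j-1})=f(\alpha_j)-f(0)=\alpha_j f'(0)+o(\alpha_j)$, the quotient $(V(w_j)-V(w_{j-1}))/\alpha_j$ converges as $\alpha_j\to 0$ to precisely the value assigned by the second branch. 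Thus $a_j$ depends continuously on $\alpha_j$, and hence on $(u,v)$, provided the auxiliary ingredients $c(u,v)$, $\phi$, $\{E_j\}$, and $\mathrm{grad}V$ vary continuously in $(u,v)$. Since $c$ is assumed continuous, $\phi$ is smooth, $V\in C^1$, and the orthonormal frame $\{E_j\}$ can be chosen to depend continuously on $c(u,v)$ locally (this is a local statement, in line with the earlier remark that the whole scheme is local), we obtain continuity of $\overline{\mathrm{grad}}_{\mathrm{IA}}V$. The main obstacle is really the limit analysis at $\alpha_j=0$; everything else is bookkeeping.
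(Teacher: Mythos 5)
Your proof is correct and follows essentially the same route as the paper's: a telescoping sum for the secant identity \eqref{eq:DGprop1}, the retraction property $d\phi_u\big|_{0_u}=\mathrm{id}_{T_uM}$ for consistency \eqref{eq:DGprop2}, and the difference-quotient limit matching the two branches of $a_j$ for continuity. Your explicit handling of the $\alpha_j=0$ terms in the telescope and of the orthogonal-versus-orthonormal normalization tightens details the paper's proof leaves implicit, but the underlying argument is the same.
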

\begin{proof}
Continuity of $\overline{\mathrm{grad}}_{\mathrm{IA}} V$ can be seen from the smoothness of the local coordinate frame $\{E_j\}_{j = 1}^n$ and from the continuity of the $a_j(\alpha_j)$:
\begin{align*}
\lim_{\alpha_j \rightarrow 0} a_j(\alpha_j) &= \lim_{\alpha_j \rightarrow 0} \dfrac{V\left(\phi_c \left(\eta_{j-1} + \alpha_j E_j\right)\right) - V\left(\phi_c \left(\eta_{j-1} \right)\right) }{\alpha_j}\\
&= \dfrac{\mathrm{d}}{\mathrm{d}\alpha_j}\bigg|_{\alpha_j = 0} V\left(\phi_c \left(\eta_{j-1} + \alpha_j E_j\right)\right) \\
&= \left\langle \mathrm{d}V\left(\phi_c \left(\eta_{j-1} \right)\right), d\phi_c \big|_{\eta_{j-1}}E_j   \right\rangle\\
&= g(\mathrm{grad}V(w_{j-1}),d \phi_c\big|_{\eta_{j-1}} E_j).
\end{align*}
Property \eqref{eq:DGprop1} holds since 
\begin{align*}
g \left( \overline{\mathrm{grad}}_{\mathrm{IA}} V(u,v),\phi_c^{-1}(v) - \phi_c^{-1}(u) \right) &= \sum \limits_{i = 1}^{n} \sum_{j = 1}^{n} \alpha_i a_j g(E_i ,E_j)\\
&= \sum_{j = 1}^{n} V(w_j) - V(w_{j-1})\\
&= V(w_n) - V(w_0)\\
&= V(v) - V(u).
\end{align*}
Furthermore, \eqref{eq:DGprop2} holds since when $v = u$, all $\alpha_j = 0$ and $c(u,v) = u$  so that
\begin{align*}
\overline{\mathrm{grad}}_{\mathrm{IA}} V(u,u) = \sum_{j= 1}^n g(\mathrm{grad} V(u),E_j) E_j = \mathrm{grad} V(u). 
\end{align*}
\end{proof}
The map $\overline{\mathrm{grad}}_{\mathrm{IA}} V$ is called the Itoh--Abe discrete Riemannian gradient. For the Itoh--Abe DRG to be a computationally viable option it is important to compute the $\alpha_i$ efficiently. Consider for instance the gradient flow system. Applying the Itoh--Abe DRG to this we get the scheme
\begin{align*}
u^{k+1} &= \phi_{c^k} \left(W(u^k,u^{k+1})\right),\\
W(u^k,u^{k+1}) &= \phi_{c^k}^{-1}(u^k) - \tau_k \overline{\mathrm{grad}}_{\mathrm{IA}}  V(u^k,u^{k+1}),
\end{align*}
meaning 
\begin{align*}
\phi_{c^k}^{-1}(u^{k+1}) - \phi_{c^k}^{-1}(u^k) = - \tau_k \overline{\mathrm{grad}}_{\mathrm{IA}} V(u^k,u^{k+1}),
\end{align*}
and in coordinates
\begin{align*}
\sum \limits_{i=1}^n \alpha_i E_i = - \tau_k \sum \limits_{j=1}^{n} \dfrac{V(w_j) - V(w_{j-1})}{\alpha_j} E_j,
\end{align*}
so that the $\alpha_i$ are found by solving the $n$ coupled equations
\begin{align*}
\alpha_i = - \tau_k \dfrac{V(w_i) - V(w_{i-1})}{\alpha_i}.
\end{align*}
Note that these equations in general are fully implicit in the sense that they require knowledge of the endpoint $u^{k+1}$ since the $w_i$ are dependent on $c^k$. However, if we take $c^k = u^k$, there is no dependency on the endpoint and all the above equations become scalar, although one must solve them successively. For this choice of $c^k$ we present, as \cref{alg:DG}, a procedure for solving the gradient flow problem on a Riemannian manifold with Riemannian metric $g$ using the Itoh--Abe DRG.

\begin{algorithm}[DRG-OPTIM]
\begin{algorithmic}
\label{alg:DG}
\STATE{}
\STATE{Choose $tol > 0$ and $ u^0 \in M.$ Set $ k = 0. $}
\REPEAT 
\STATE{Choose $\tau_k $  and  an orthogonal basis $\{E_i^k\}_{i=1}^n$  for $T_{u^k}M$}
\STATE{$v_0^k = u^k$}
\STATE{$w_{0}^k = \phi_{u^k}^{-1}(v_{0}^k)$}
\FOR{$j = 1,...,n$}
\STATE{Solve $\alpha_j^k  = - \tau_k \left(V\left(\phi_{u^k}(w_{j-1}^{k} + \alpha_j^k E_j^k)\right) - V\left(v_{j-1}^{k}\right)\right)/\alpha_j^k$}
\STATE{$w_{j}^k = w_{j-1}^k + \alpha_j^k E_j^k $}
\STATE{$v_j^k = \phi_{u^k}(w_j^k) $}
\ENDFOR
\STATE{$u^{k+1} = v_{n}^k$}
\STATE{$k = k + 1$}
\UNTIL{$\left(V(u^k) - V(u^{k-1})\right)/V(u^0) < tol$}
\end{algorithmic}
\end{algorithm}

There is a caveat to this algorithm in that the $\alpha_j^k$ should be easy to compute. 
For example, it is important that the $E_j$ and $\phi$ are chosen such that the difference $V(\phi_{u^k}(w_{j-1}^{k} + \alpha_j^k E_j^k)) - V(v_{j-1}^{k})$ is cheap to evaluate. 
In many cases, $M$ has a natural interpretation as a submanifold of Euclidean space defined locally by constraints $g:\mathbb{R}^m \rightarrow \mathbb{R}^n$, $M = \{ y \in U \subset \mathbb{R}^m : g(y) = 0 \}$.
Then, one may find $\{E_j\}_{j = 1}^n$ as an orthogonal basis for $\mathrm{ker} \,g'(c)$ and define $\phi_c$ implicitly by taking $q = \phi_c(v)$ such that $q - (c+v) \in (T_cM)^{\perp}$ and $g(q) = 0$, as detailed in \cite{celledoni2002class}.
This requires the solution of a nonlinear system of equations for every coordinate update, which is computationally demanding compared to evaluating explicit expressions for $\{E_j\}_{j = 1}^n$ and $\phi_c$ as is possible in special cases, such as those considered in \cref{sect:numexp}.
To compute the $\alpha_j^k$ at each coordinate step one can use any suitable root finder, yet to stay in line with the derivative-free nature of \cref{alg:DG}, one may wish to use a solver like the Brent--Dekker algorithm \cite{brent1971algorithm}. Also worth noting is that the parallelization procedure used in \cite{ringholm2017variational} works for \cref{alg:DG} as well. 

\section{Numerical experiments} \label{sect:numexp}
This section concerns four applications of DRG methods to gradient flow systems. In each case, we specify all details needed to implement \cref{alg:DG} the manifold $M$, retraction $\phi$, and basis vectors $\{ E_k\}$. The first two examples are eigenvalue problems, included to illuminate implementational issues with examples in a familiar setting. We do not claim that our algorithm is competitive with other eigenvalue solvers, but include these examples for the sake of exposition and to have problems with readily available reference solutions. The first of these is a simple Rayleigh quotient minimization problem, where issues of computational efficiency are raised. The second one concerns the Brockett flow on $\mathrm{SO}(m)$, the space of orthogonal $m \times m$ matrices with unit determinant, and serves as an example of optimization on a Lie group. The remaining two problems are examples of manifold-valued image analysis problems concerning Interferometric Synthetic Aperture Radar (InSAR) imaging and Diffusion Tensor Imaging (DTI), respectively. 
Specifically, the problems concern total variation denoising of images obtained through these techniques \cite{weinmann2014total}. 
The experiments do not consider the quality of the solution paths, i.e. numerical accuracy. For experiments of this kind, we refer to \cite{celledoni2018energy}.

All programs used in the following were implemented as MATLAB functions, with critical functions implemented in C using the MATLAB EXecutable (MEX) interface when necessary. The code was executed using MATLAB (2017a release) running on a Mid 2014 MacBook Pro with a four-core 2.5 GHz Intel Core i7 processor and 16 GB of 1600 MHz DDR3 RAM. We used a C language port of the built-in MATLAB function $\mathtt{fzero}$ for the Brent-Dekker algorithm implementation.

\subsection{Eigenvalue problems}
As an expository example, our first problem consists of finding the smallest eigenvalue/vector pair of a symmetric $m \times m$ matrix $A$ by minimizing its Rayleigh quotient. We shall solve this problem using both the extrinsic and intrinsic view of the $(m-1)$-sphere. In the second example we consider the different approach to the eigenvalue problem proposed by Brockett in \cite{brockett1988dynamical}. Here, the gradient flow on $\mathrm{SO}(m)$ produces a diagonalizing matrix for a given symmetric matrix.

\subsubsection{Eigenvalues via Rayleigh quotient minimization}
In our first example, we wish to compute the smallest eigenvalue of a symmetric matrix $A \in \mathbb{R}^{m \times m}$ by minimizing the Rayleigh quotient
\begin{align*}
V(u) = u^TA u
\end{align*}
with $u$ on the $(m-1)$-sphere $S^{m-1}$.

Taking the extrinsic view, we regard $S^{m-1}$ as a submanifold in $\mathbb{R}^m$, equipped with the standard Euclidian metric $g(x,y) = x^T y$. In this representation, $T_u S^{m-1}$ is the hyperplane tangent to $u$, i.e.  $T_u S^{m-1} = \{x \in \mathbb{R}^m : x^T u = 0 \}.
$ A natural choice of retraction is
\begin{align*}
\phi_p(x) = \dfrac{p + x}{\|p+x\|}.
\end{align*}
There is a difficulty with this $\phi$; it does not preserve sparsity, meaning \cref{alg:DG} will be inefficient as discussed above. To see this, consider that at each time step, to find the $\alpha_j^k$, we must compute the difference
\begin{align*}
V(z_j^{k}) - V(z_{j-1}^{k}) = (z_j^{k})^TAz_j^{k}- (z_{j-1}^{k})^TAz_{j-1}^{k}
\end{align*}
for some $z_{j-1}^{k}, z_{j}^{k}\in S^{m-1}$. We can compute this efficiently if $z_{j}^{k}= z_{j-1}^{k} + \delta$, where $\delta$ is sparse. Then, 
\begin{align*}
V(z_{j}^{k}) - V(z_{j-1}^{k}) = 2(z_{j-1}^{k})^TA\delta + \delta^T A\delta,
\end{align*}
which is efficient since one may assume $Az_{j-1}^{k}$ to be precomputed so that the computational cost is limited by the sparsity of $\delta$. In our case, we have 
\begin{align*}
z_{j-1}^{k} = \phi_c(w_{j-1}^{k} ), \qquad z_{j}^{k} = \phi_c(w_{j-1}^{k} + \alpha_j^k E_j).
\end{align*}
However, with $\phi_c$ as above, $ \delta = \phi_c(w_{j-1}^{k} + \alpha_j^k E_j) - \phi_c(w_{j-1}^{k} )$ is non-sparse, and so computing the energy difference is costly. 

Next, let us consider the intrinsic view of $S^{m-1}$, representing it in spherical coordinates $\theta \in \mathbb{R}^{m-1}$ by
\begin{align*}
u_1(\theta) &= \cos(\theta_1),\\
u_r(\theta) &= \cos(\theta_r)\prod \limits_{i=1}^{r-1} \sin (\theta_i), \quad 1 < r < m,\\
u_m(\theta) &= \prod \limits_{i=1}^{m-1} \sin (\theta_i).
\end{align*}
Due to the simple structure of $\mathbb{R}^{m-1}$, we take $\phi_\theta(\eta) = \theta + \eta$. Then, we have
\begin{align*}
u_r(\phi_\theta(\alpha E_l)) = u_r(\theta + \alpha E_l) =
\begin{cases}
u_r(\theta), &r < l\\
\dfrac{\cos(\theta_l + \alpha)}{\cos(\theta_l)}u_r(\theta) , &r = l \\
\dfrac{\sin(\theta_l + \alpha)}{\sin(\theta_l)}u_r(\theta) , &r > l.
\end{cases}
\end{align*}
Using this relation, the energy difference after a coordinate update becomes:
\begin{align*}
V(u(\theta + \alpha E_l)) - V(u(\theta)) &= 2\kappa_{1l}\sum_{i = 1}^{l-1}  u_i(\theta)u_l(\theta) A_{il} + 2\kappa_{2l}\sum_{i = 1}^{l-1}\sum_{j = l+1}^{m} u_i(\theta)u_j(\theta) A_{ij} \\
&+ 2\kappa_{3l}\sum_{j = l+1}^{m} u_l(\theta)u_j(\theta) A_{lj} + \kappa_{4l} \sum_{i = l+1}^{m} \! \sum_{j = l+1}^{m} u_i(\theta)u_j(\theta) A_{ij}.\\
&+\kappa_{5l} u_l(\theta)u_l(\theta) A_{ll},
\end{align*}
with
\begin{align*}
\kappa_{1l} = c_l - 1,  \quad  
\kappa_{2l} = s_l- 1,  \quad 
\kappa_{3l} = s_l c_l - 1, \quad 
\kappa_{4l} = s_l^2 - 1,  \quad 
\kappa_{5l} =  c_l^2  - 1,
\end{align*}
where
\begin{align*}
c_l = \dfrac{\cos(\theta_l + \alpha)}{\cos(\theta_l)}, \quad  s_l = \dfrac{\sin(\theta_l + \alpha)}{\sin(\theta_l) }.
\end{align*}
With prior knowledge of $V(u(\theta))$ (and thus the four partial sums in the difference), evaluating $V(u(\theta + \alpha E_l)) - V(u(\theta))$ amounts to five scalar multiplications and four scalar additions after evaluating the $\kappa_i^l$. With correct bookkeeping, new sums can be evaluated from previous sums after coordinate updates, reducing the computational complexity of the algorithm. Although not producing an algorithm competitive with standard eigenvalue solvers, this example demonstrates that the correct choice of coordinates is vital to reducing the computational complexity of the Itoh--Abe DRG method.

\subsubsection{Eigenvalues via Brockett flow}
\label{subsect:Brockett}
Among other things, the article of Brockett \cite{brockett1988dynamical} discusses how one may find the eigenvalues of a symmetric matrix $A$ by solving the following gradient flow problem on $M = \mathrm{SO}(m)$:
\begin{align}
\dot{Q} = -Q(D Q^T A Q - Q^T A Q D) \label{eq:Brockett}
\end{align}
Here, $D$ is a real diagonal matrix with non-repeated entries. It can be shown that $\lim_{t \rightarrow \infty} Q = Q^*$, where $(Q^*)^T A Q^* = \Lambda$ is diagonal and hence contains the eigenvalues of $A$, ordered as the entries of $D$. Equation \eqref{eq:Brockett} is the gradient flow of the energy
\begin{align}
V(Q) = \mathrm{tr}(AQ^T D Q) \label{eq:BrockettDoubleBracket}
\end{align}
with respect to the trace metric on $\mathrm{SO}(m)$. One can check that $\mathrm{SO}(m)$ is a Lie group \cite{warner2013foundations}, with Lie algebra
\begin{align*}
\mathrm{\mathfrak{so}}(m) = \{B \in\mathbb{R}^{m \times m} : B^T = -B \}.
\end{align*}
Also, since $\mathrm{SO}(m)$ is a matrix Lie group, the exponential coincides with the matrix exponential. However, we may consider using some other function as a retraction, such as the Cayley transform $\phi: \mathfrak{so}(m) \rightarrow \mathrm{SO}(m)$ given by
\begin{align*}
\phi(B) = (I-B)^{-1}(I+B).
\end{align*}

\begin{figure}[ht]
                \centering
        \begin{minipage}{.5\textwidth}
            \centering
            \includegraphics[width=0.98\linewidth]{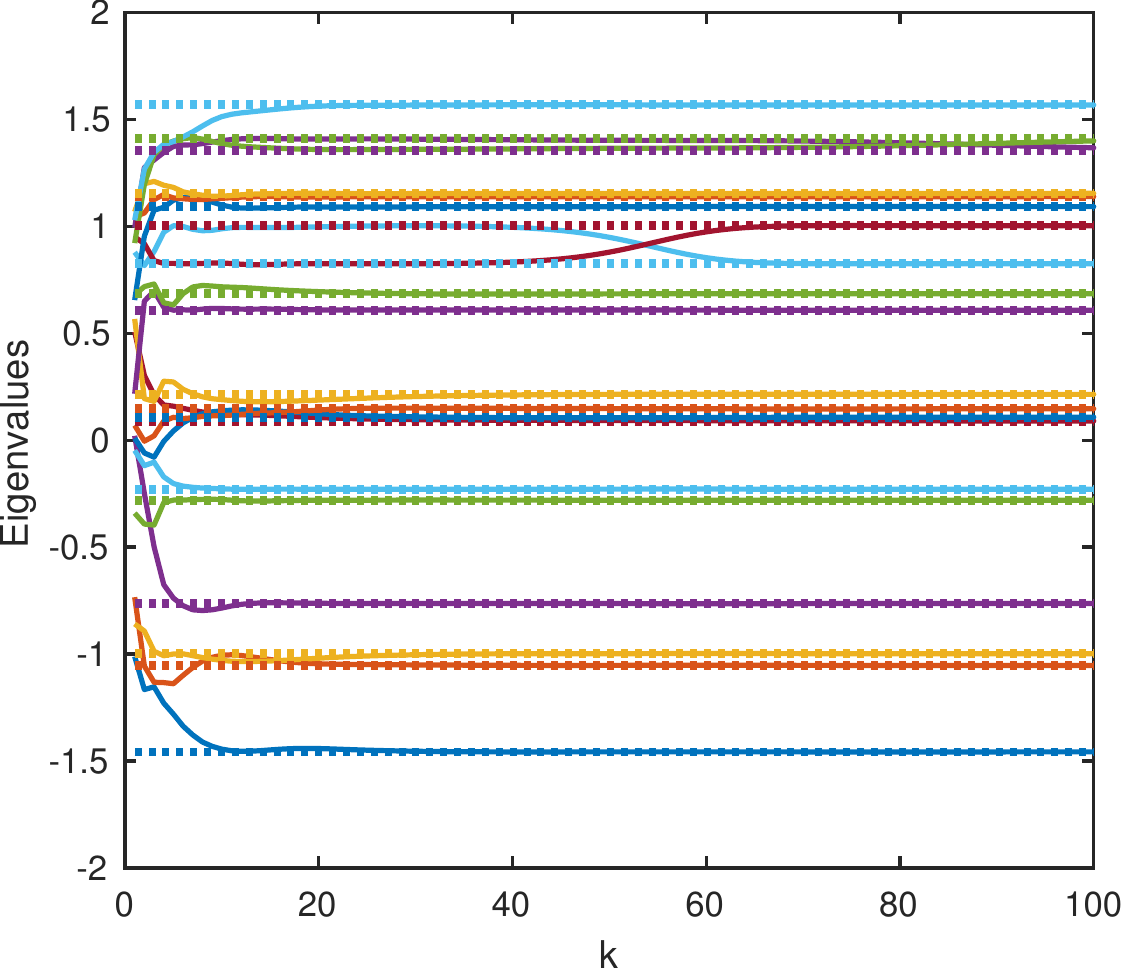}
        \end{minipage}%
        \begin{minipage}{.5\textwidth}
            \centering
            \includegraphics[width=0.98\linewidth]{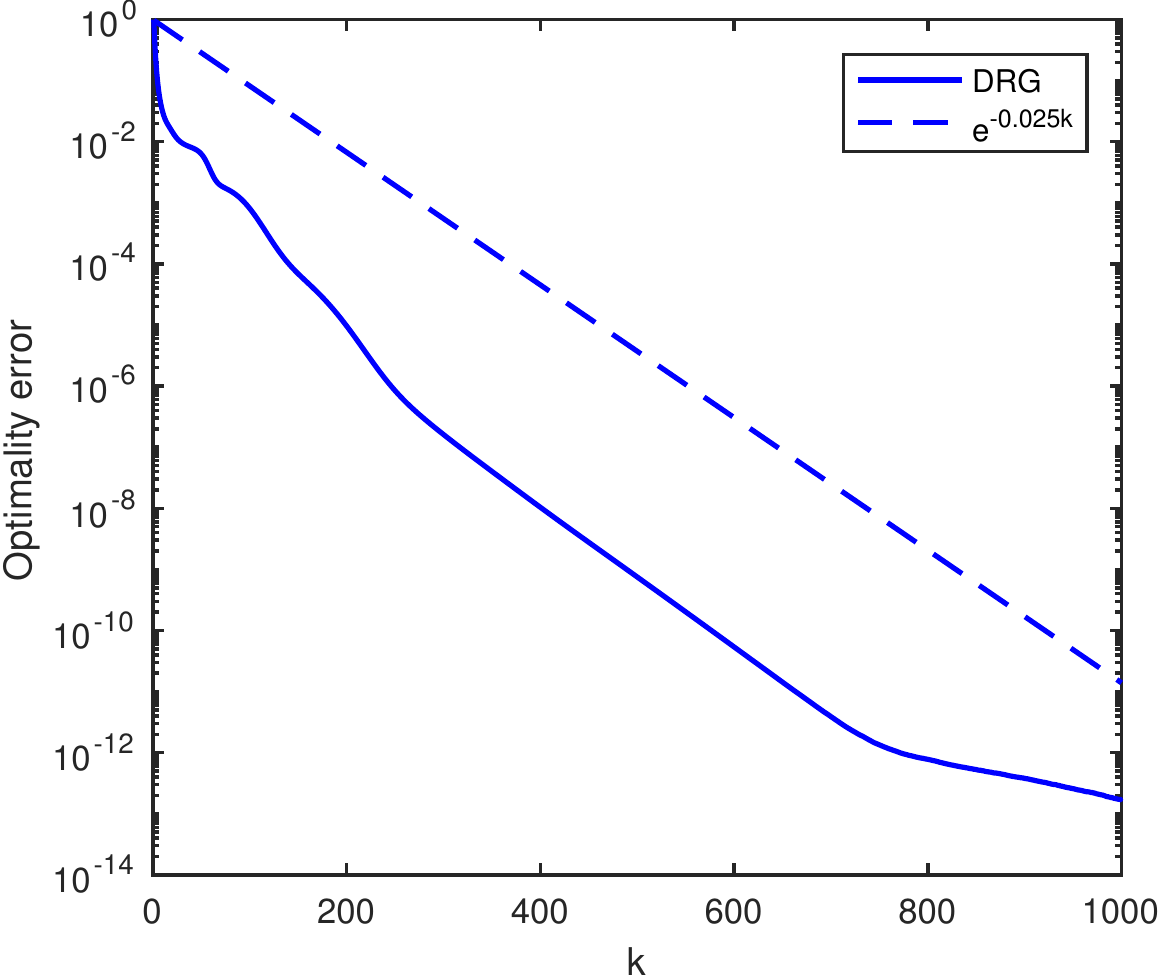}
        \end{minipage}%
        \caption{Brockett flow with $\tau_k = 0.1$ and 20 eigenvalues. Random initial matrix. Left: Evolution of eigenvalues. Right: Optimality error $(V(u^k) - V^*)/(V(u^0) - V^*)$.}
        \label{fig:Brockett}
\end{figure}
\cref{fig:Brockett} shows the results of numerical tests with constant time step $\tau_k = 0.1$ and $m = 20$. In the left hand panel, the evolution of the diagonal values of $Q^{k}A Q^{k}$ compared to the spectrum of $A$ is shown; it is apparent that the diagonal values converge to the eigenvalues. The right hand panel shows the convergence rate of \cref{alg:DG} to the minimal value $V^*$ as computed with eigenvalues and eigenvectors from MATLAB's $\mathtt{eigen}$ function. It would appear that the convergence rate is linear, meaning $\|D - (Q^{k+1})^TAQ^{k+1} \| = C \|D - (Q^{k})^TAQ^{k} \|$, with $C < 1$, which corresponds to an exponential reduction in $\|D - (Q^{k})^TAQ^{k} \|$. No noteworthy difference was observed when using the matrix exponential in place of the Cayley transform.

\subsection{Manifold valued imaging} \label{subsect:mfldvalimg}
In the following two examples we will consider problems from  manifold valued 2D imaging. We will in both cases work on a product manifold $\mathcal{M} = M^{l\times m}$ consisting of $l\times m$ copies of an underlying data manifold $M$. An element of $M$ will in this case be called an \textit{atom}, as opposed to the regular term \textit{pixel}. As explained in \cite{lee2006riemannian}, product manifolds of Riemannian manifolds are again Riemannian manifolds. The tangent spaces of product manifolds have a natural structure as direct sums, with $T_{(u_{11}, u_{12}, ..., u_{lm} )}\mathcal{M} = \bigoplus_{i,j=1}^{l,m} T_{u_{ij}}M$, which induces a natural Riemannian metric $\mathcal{G}: T\mathcal{M} \times T\mathcal{M} \rightarrow \mathbb{R}$ fiberwise as
\begin{align*}
\mathcal{G}_{(u_{11}, u_{12}, ..., u_{lm} )}((x_{11},...,x_{lm}),(y_{11},...,y_{lm})) = \sum \limits_{i,j = 1}^{l,m} g_{u_{ij}}(x_{ij},y_{ij}).
\end{align*}
Also, given a retraction $\phi : TM \rightarrow M$, one can define a retraction $\Phi: T\mathcal{M} \rightarrow \mathcal{M}$ fiberwise as 
\begin{align*}
\Phi_{(u_{11}, u_{12}, ..., u_{lm} )}(x_{11},...,x_{lm}) = (\phi_{u_{11}}(x_{11}), \phi_{u_{12}}(x_{12}), ..., \phi_{u_{lm}}(x_{lm})).
\end{align*} 
Discrete gradients were first used in optimization algorithms for image analysis in \cite{Grimm_2016} and \cite{ringholm2017variational}. As an example of a manifold-valued imaging problem, consider Total Variation (TV) denoising of manifold valued images \cite{weinmann2014total}, where one wishes to minimize, based on generalizations of the $L^\beta$ and $L^{\gamma}$ norms:
\begin{align}
V(u) = \dfrac{1}{\beta}\sum_{i,j = 1}^{l,m}\mathrm{d}(u_{ij},s_{ij})^\beta + \lambda \left( \sum_{i,j = 1}^{l-1,m}\mathrm{d}(u_{ij},u_{i+1,j})^\gamma  + \sum_{i,j = 1}^{l,m-1}\mathrm{d}(u_{ij},u_{i,j+1})^\gamma \right).
\label{eq:TV_mfld}
\end{align}
Here, $s = (s_{11},...,s_{lm}) \in \mathcal{M}$ is the input image, $u = (u_{11},...,u_{lm}) \in \mathcal{M}$ is the output image, $\lambda$ is a regularization strength constant, and $\mathrm{d}$ is a metric on $M$, which we will take to be the geodesic distance induced by $g$.

\subsubsection{InSAR image denoising}
We first consider Interferometric Synthetic Aperture Radar (InSAR) imaging, used in earth observation and terrain modelling \cite{rosen2000synthetic}. In InSAR imaging, terrain elevation is measured by means of phase differences between laser pulses reflected from a surface at different times. Thus, the atoms  $g_{ij}$ are elements of $M = S^1$, represented by their phase angles: $-\pi < g_{ij} \leq \pi$. After processing, the phase data is \textit{unwrapped} to form a single, continuous image of displacement data \cite{goldstein1988satellite}. The natural distance function in this representation is the angular distance
\begin{align*}
\mathrm{d}(\varphi,\theta) = 
\begin{cases}
|\varphi-\theta|, \quad &|\varphi-\theta| \leq \pi\\
2 \pi - |\varphi-\theta|, \quad &|\varphi-\theta| > \pi.
\end{cases}
\end{align*}
Also, $T_\varphi M$ is simply $\mathbb{R}$, and $\phi$ is given, with $\underset{ 2 \pi}+ $ denoting addition modulo $2\pi$, as:
\begin{align*}
\phi_\varphi(\theta_\varphi) = (\theta \underset{ 2 \pi}+ (\varphi+\pi)) - \pi.
\end{align*}

\begin{figure}[ht!]
                \centering
        \begin{minipage}{.45\textwidth}
            \centering
            \includegraphics[width=0.98\linewidth]{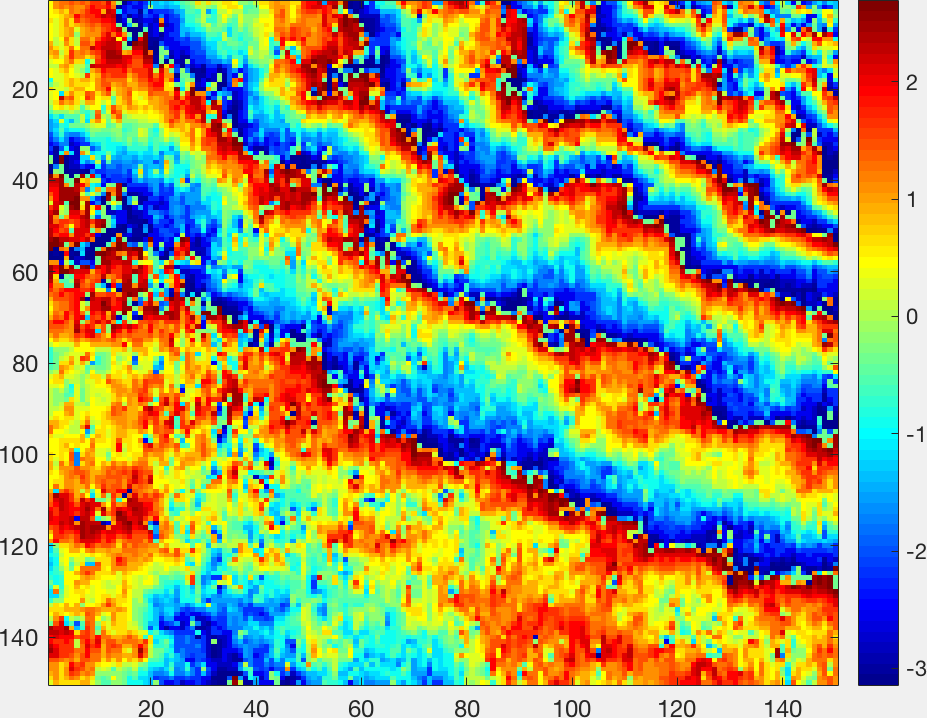}
        \end{minipage}%
        \begin{minipage}{.45\textwidth}
            \centering
            \includegraphics[width=0.98\linewidth]{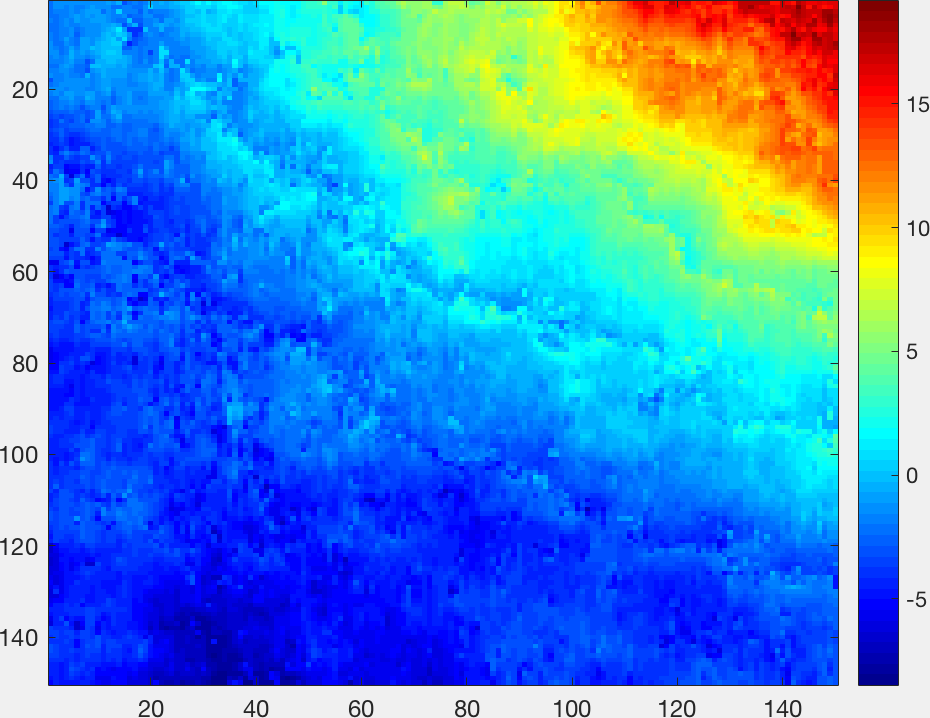}
        \end{minipage}
		\\
		\vspace{3pt}
                \centering
        \begin{minipage}{.45\textwidth}
            \centering
            \includegraphics[width=0.98\linewidth]{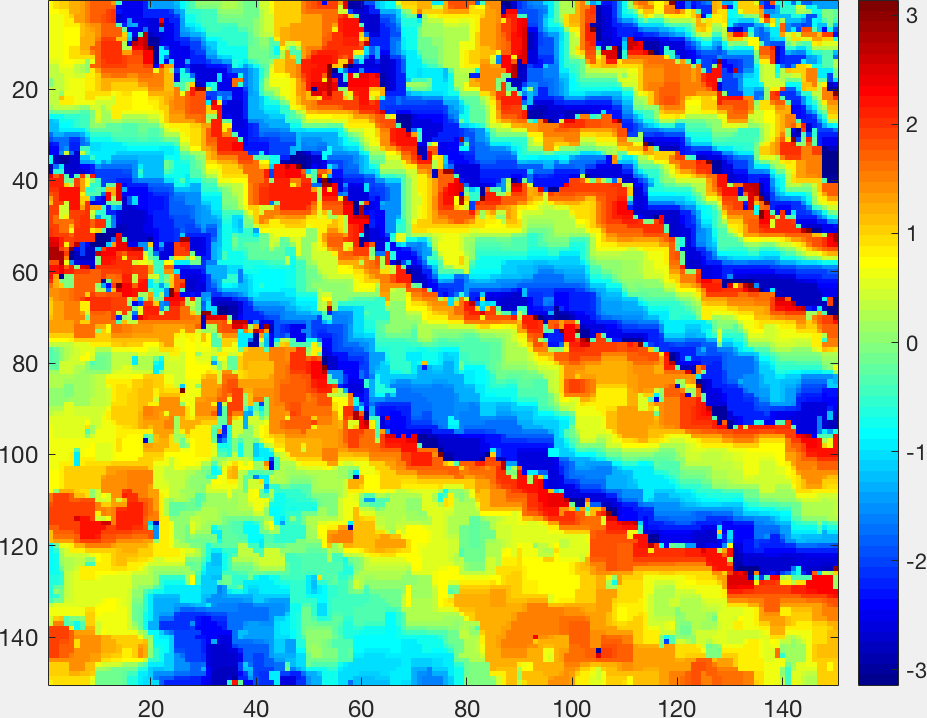}
        \end{minipage}%
        \begin{minipage}{.45\textwidth}
            \centering
            \includegraphics[width=.98\linewidth]{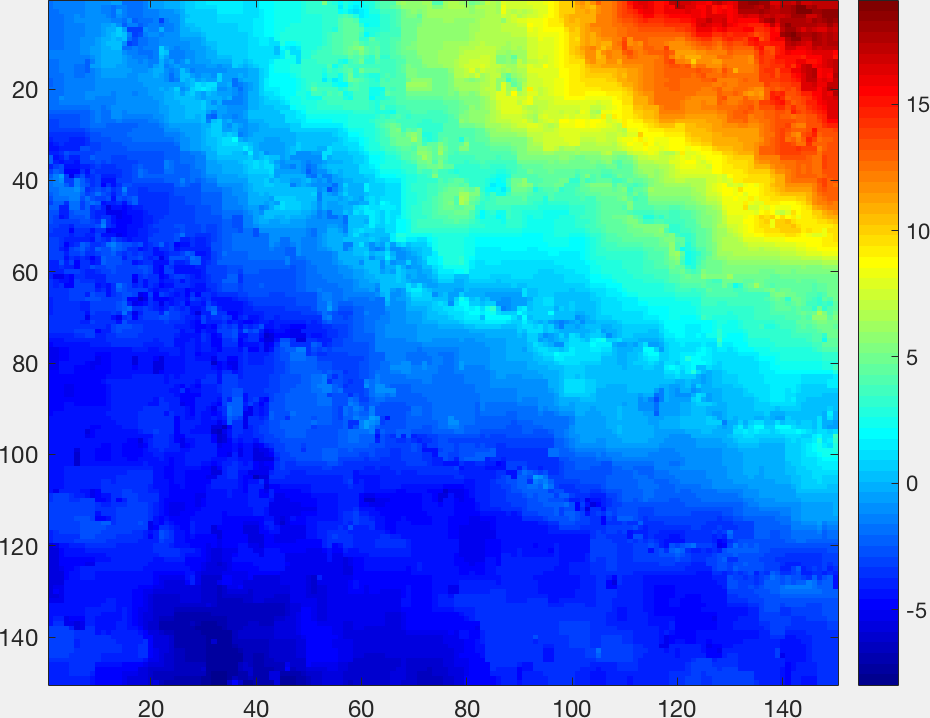}
        \end{minipage}
        \caption{Left column: Interferogram. Right column: Phase unwrapped image. Top row: Original image. Bottom row: L2 fidelity denoising, $\lambda = 0.3$.}
        \label{fig:InSAR}
\end{figure}

\begin{figure}[ht!]
            \centering
            \includegraphics[width=0.5\linewidth]{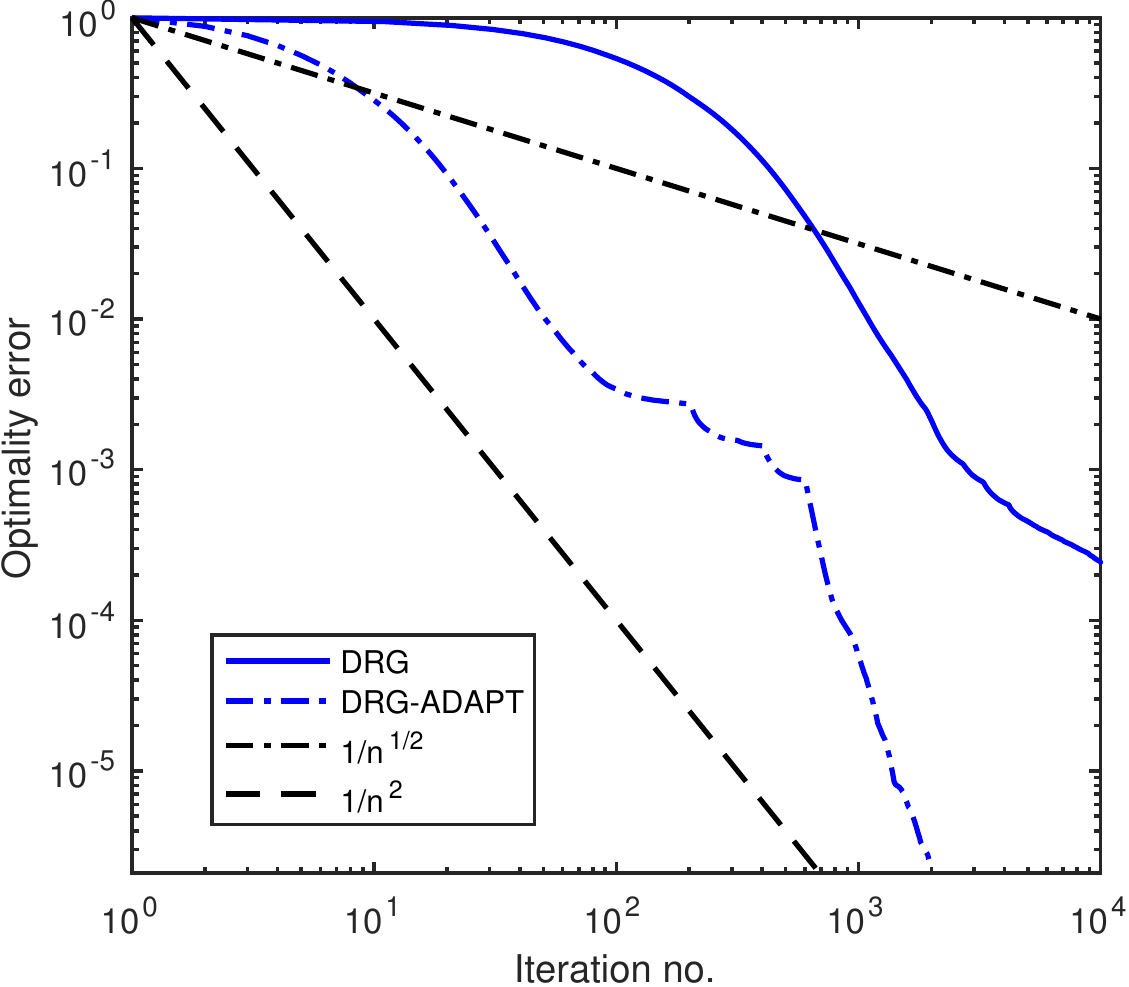}
        \caption{Logarithmic plot of optimality error $(V(u^k) - V^*)/(V(u^0) - V^*)$.}
        \label{fig:InSARConv}
\end{figure}
\cref{fig:InSAR} shows the result of applying TV denoising to an InSAR image of a slope of Mt. Vesuvius, Italy, with $\beta = 2$. The left column shows the phase data, while the right hand side shows the phase unwrapped data. 
The input image was taken from \cite{rocca1997overview}. 
It is evident that the algorithm is successful in removing noise. Computation time was 0.1 seconds per iteration on a 150$\times$150 image. A logarithmic plot showing convergence in terms of $(V(u^k) - V^*)/(V(u^0) - V^*)$ is shown in \cref{fig:InSARConv}, where $V^*$ is a near-optimal value for $V$, obtained by iterating until $V(u^{k+1}) - V(u^k) \leq 10^{-15}$. 
The plot shows the behaviour of \cref{alg:DG} with constant time steps $\tau_k = \tau_0 = 0.002$ and an ad-hoc adaptive method with $\tau_0 = 0.005$ where $\tau_k$ is halved each 200 iterations; for each of these strategies a separate $V^*$ was found since they did not produce convergence to the same minimizer. 
The reason for the different minimizers is that the TV functional, and thus the minimization problem, is non-convex in $S^1$ \cite{strekalovskiy2011total}. 
We can observe that the convergence speed varies between $\mathcal{O}(1/k)$ and $\mathcal{O}(1/k^2)$, with faster convergence for the ad-hoc adaptive method. 
The reason for this sublinear convergence as compared to the linear convergence observed in the Brockett flow case may be the non-convexity.

\subsubsection{DTI image denoising}
Diffusion Tensor Imaging (DTI) is a medical imaging technique where the goal is to make spatial samples of the tensor specifying the diffusion rates of water in biological tissue. The tensor is assumed to be, at each point $(i,j)$, represented by a matrix $A_{ij} \in \text{Sym}^+(3)$, the space of $3 \times 3$ symmetric positive definite (SPD) matrices. Experimental measurements of DTI data are, as with other MRI techniques, contaminated by Rician noise \cite{gudbjartsson1995rician}, which one may attempt to remove by minimizing \eqref{eq:TV_mfld} with an appropriate choice of Riemannian structure on $\mathcal{M} = \text{Sym}^+(3)^{m \times l}$.

As above, since the manifold we are working on is a product manifold, it suffices to define the Riemannian structure on $\text{Sym}^+(3)$. First off, one should note that $T_A\text{Sym}^+(3)$ can be identified with $\text{Sym}(3)$, the space of symmetric $3 \times 3$ matrices \cite{lang2012fundamentals}. In \cite{weinmann2014total}, the authors consider equipping $\text{Sym}^+(3)$ with the affine invariant Riemannian metric given pointwise as
\begin{align*}
g_{A}(X,Y) = \text{tr}(A^{-\frac{1}{2}}X A^{-1} Y A^{-\frac{1}{2}}),
\end{align*}
and for purposes of comparison, so shall we. The space $\text{Sym}^+(3)$ equipped with this metric is a Cartan-Hadamard manifold \cite{lang2012fundamentals}, and thus is complete, meaning that \cref{theo:convergence} holds. This metric induces the explicitly computable geodesic distance
\begin{align*}
d(A,B) = \sqrt{\sum \limits_{i = 1}^3 \log(\kappa_i)^2}
\end{align*}
on $\text{Sym}^+(3)$, where $\kappa_i$ are the eigenvalues of $A^{-\frac{1}{2}} B A^{-\frac{1}{2}}$. Furthermore, the metric induces a Riemannian exponential given by
\begin{align*}
\exp_A(Y) = A^{1/2} \mathrm{e}^{A^{-1/2} Y A^{-1/2}} A^{1/2}
\end{align*}
where $e$ denotes the matrix exponential, and $A^{1/2}$ is the matrix square root of $A$. We could choose the retraction as $\phi = \exp$, but there are less computationally expensive options that do not involve computing matrix exponentials. More specifically, we will make use of the second-order approximation of the exponential,
\begin{align*}
\phi_A(Y) = A + Y + \frac{1}{2} Y A^{-1} Y.
\end{align*}
While a first-order expansion is also a retraction, there is no guarantee that $A + Y \in \text{Sym}^+(3)$, whereas the second-order expansion, which can be written on the form
\begin{align*}
\phi_A(Y) = \frac{1}{2}A + \frac{1}{2}(A^{\frac{1}{2}}+A^{-\frac{1}{2}}Y)^T(A^{\frac{1}{2}}+A^{-\frac{1}{2}}Y),
\end{align*}
is clearly symmetric positive definite since $A$ is so. Note that using a sparse basis $E_{ij}$ (in our example we use $E_{ij} = e_i e_j^T + e_j e_i^T$) for the space $\text{Sym}(3)$, evaluating $\phi_A(X + \alpha E_{ij})$ amounts to, at most, four scalar updates when $\phi_A(X)$ and $A^{-1}$ is known, as is possible with proper bookkeeping in the software implementation. Also, since all matrices involved are $3\times3$ SPD matrices, one may find eigenvalues and eigenvectors directly, thus allowing for fast computations of matrix square roots and, consequently, geodesic distances. 
\begin{figure}[ht]
        \begin{minipage}{.43\textwidth}
                \centering
            \includegraphics[width=\linewidth]{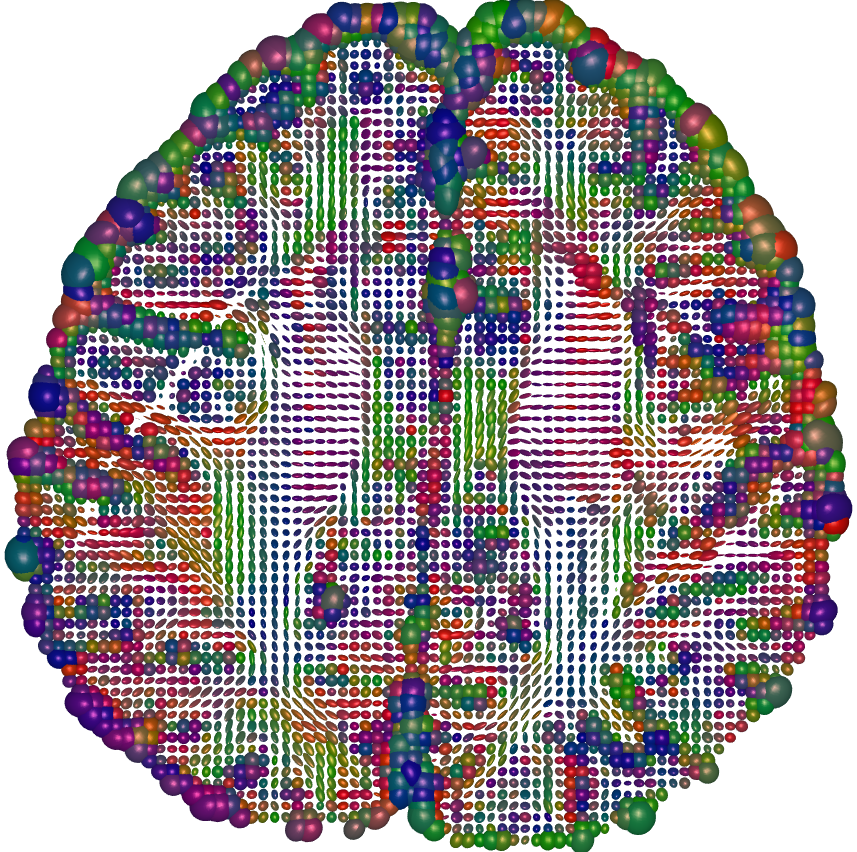}
        \end{minipage}
        \begin{minipage}{.43\textwidth}
        \includegraphics[width=\linewidth]{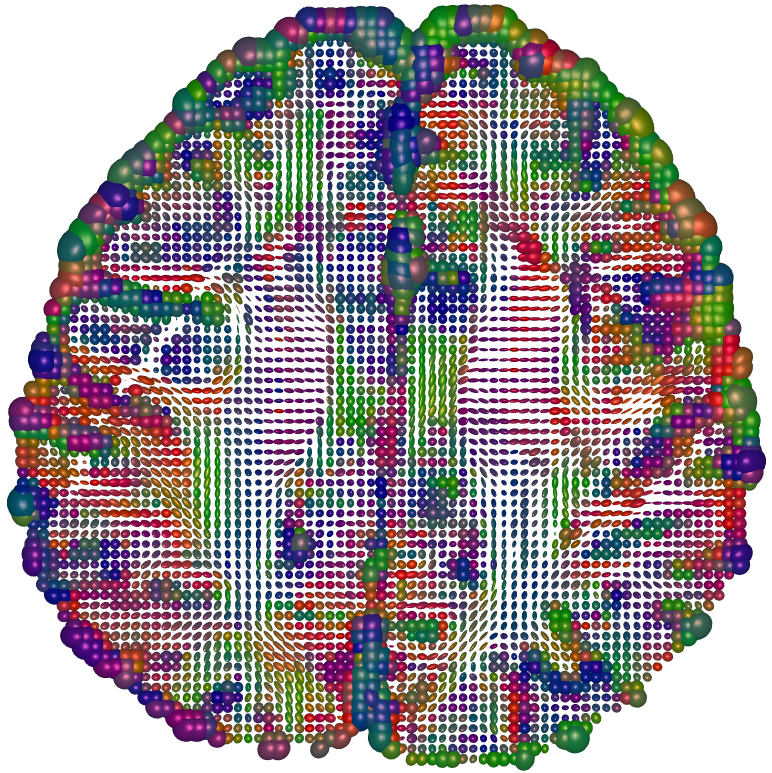}
        \end{minipage}
                \caption{DTI scan, axial slice. Left: Noisy image. Right: Denoised with $\beta = 2 $, $ \lambda = 0.05$.}
        \label{fig:DTI}
\end{figure}

\begin{figure}[t]
                \centering
        \begin{minipage}{.45\textwidth}
            \includegraphics[width=\linewidth]{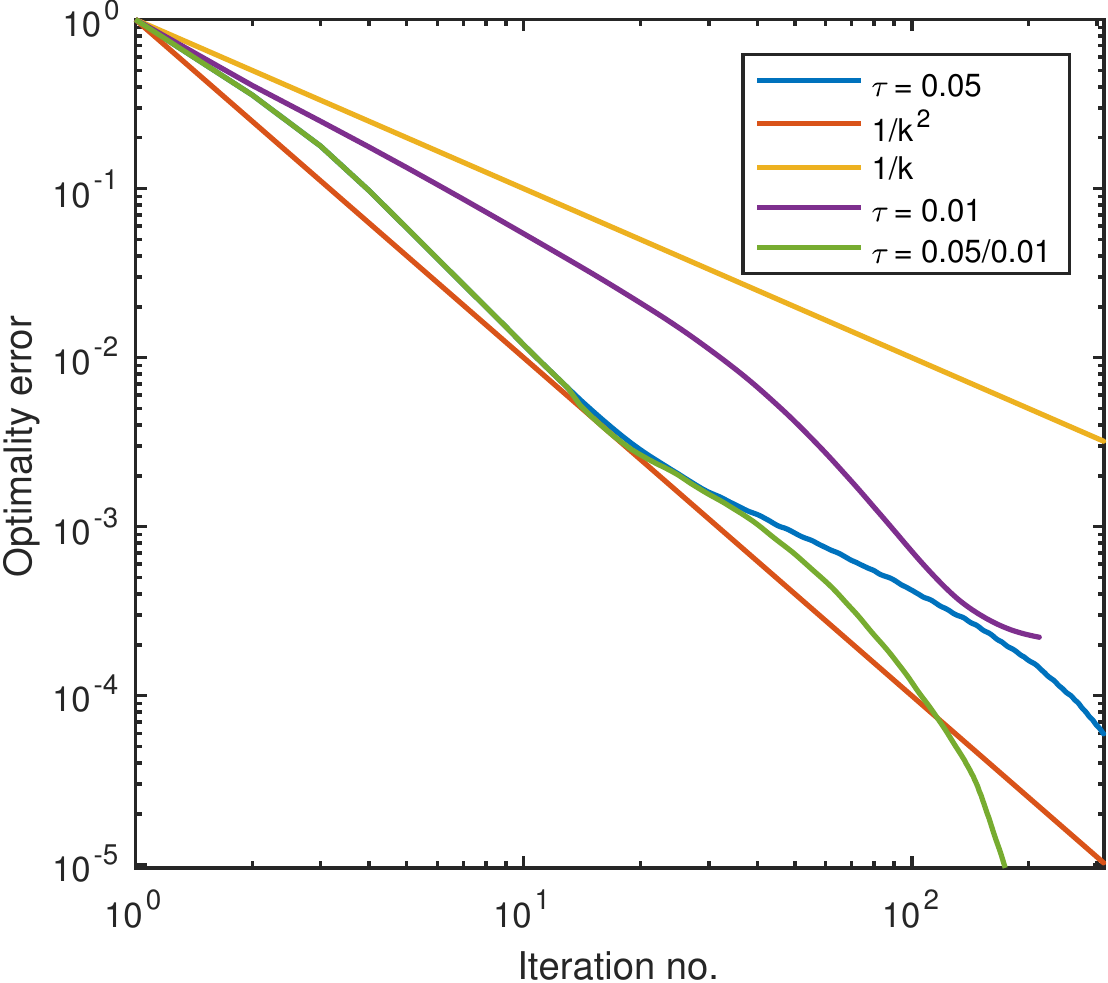}
        \end{minipage}
                \caption{Logarithmic plot of optimality error.}
        \label{fig:DTIConv}
\end{figure}

\cref{fig:DTI} shows an example of denoising DTI images using the TV regularizer. The data is taken from the publicly available Camino data set \cite{cook2006camino}. The DTI tensor has been calculated from underlying data using linear least-squares fitting, and is subject to Rician noise (left hand side), which is mitigated by TV denoising (right hand side). The denoising procedure took about 7 seconds for 57 iterations, on a 72$\times$73 image. The algorithm was stopped when the relative change in energy, $(V(u^0) - V(u^k))/V(u^0)$ dropped below $10^{-5}$. Each atom $A \in \text{Sym}^+(3)$ is visualized by an ellipsoid with the eigenvectors of $A$ as principal semi-axes, scaled by the corresponding eigenvalues. The colors are coded to correspond to the principal direction of the major axis, with red denoting left-right orientation, green anterior-posterior and blue inferior-superior. \cref{fig:DTIConv} shows the convergence behaviour of \cref{alg:DG}, with three different time steps: $\tau = 0.05$, $\tau = 0.01$ and a mixed strategy of using $\tau = 0.05$ for 12 steps, then changing to $\tau = 0.01$. Also, baseline rates of $1/k^2$ and $1/k$ are shown. It is apparent that the choice of time step has great impact on the convergence rate, and that simply changing the time step from $\tau = 0.05$ to $\tau = 0.01$ is effective in speeding up convergence. This would suggest that time step adaptivity is a promising route for acceleration of these methods.

\section{Conclusion and outlook}
We have extended discrete gradient methods to Riemannian manifolds, and shown how they may be applied to gradient flows. The Itoh--Abe discrete gradient has been formulated in a manifold setting; this is, to the best of our knowledge, the first time this has been done.  In particular, we have used the Itoh--Abe DRG on gradient systems to produce a derivative-free optimization algorithm on Riemannian manifolds. This optimization algorithm has been proven to converge under reasonable conditions, and shows promise when applied to the problem of denoising manifold valued images using the total variation approach of \cite{weinmann2014total}.

As with the algorithm in the Euclidian case, there are open questions. The first question is which convergence rate estimates can be made; one should especially consider the linear convergence exhibited in the Brockett flow problem, and the rate observed in \cref{fig:DTIConv} which approaches $1/k^2$. A second question is how to formulate a rule for choosing step sizes so as to accelerate convergence toward minimizers. There is also the question of how the DRG methods perform as ODE solvers for dissipative problems on Riemannian manifolds; in particular, convergence properties, stability, and convergence order. The above discussion is geared toward optimization applications due to the availability of optimization problems, but it would be of interest to see how the methods work as ODE solvers in their own right similar to the analysis and experiments done in \cite{celledoni2018energy}.

{\footnotesize
\bibliography{EP,tb}}
\bibliographystyle{siamplain}

\end{document}